\theoremstyle{plain}
\newtheorem{theorem}[subsection]{Theorem}
\newtheorem{lemma}[subsection]{Lemma}
\newtheorem{proposition}[subsection]{Proposition}
\newtheorem{corollary}[subsection]{Corollary}
\theoremstyle{definition}
\newtheorem{definition}[subsection]{Definition}
\theoremstyle{remark}
\newtheorem{example}[subsection]{Example}
\newtheorem{remark}[subsection]{Remark}
\def\ophalfsplitpullback{%
 \ar@{-}[]+R+<6pt,-1pt>;[]+RD+<6pt,-6pt>%
 \ar@{-}[]+D+<.5ex,-6pt>;[]+RD+<6pt,-6pt>}
\newcommand{\defn}[1]{\textbf{#1}}
\newcommand{\To}{\Rightarrow}
\newcommand{\ontop}[2]{\overset{\mathclap{#1}}{#2}}
\newcommand{\C}{\ensuremath{\mathbb{C}}}
\newcommand{\bi}[2]{\ensuremath{\lgroup #1 \;\, #2 \rgroup}}
\newcommand{\matriz}[4]{\ensuremath{\bigl\lgroup \begin{smallmatrix} #1 & #2 \\ #3 & #4 \end{smallmatrix}\bigr\rgroup}}
\newcommand{\Gp}{\ensuremath{\mathsf{Gp}}}
\newcommand{\Mon}{\ensuremath{\mathsf{Mon}}}
\newcommand{\HSLat}{\ensuremath{\mathsf{HSLat}}}
\newcommand{\Mag}{\ensuremath{\mathsf{Mag}}}
\newcommand{\Pt}{\ensuremath{\mathsf{Pt}}}
\newcommand{\SPt}{\ensuremath{\text{$\s$-$\Pt$}}}
\newcommand{\cod}{\ensuremath{\mathrm{cod}}}
\newcommand{\End}{\ensuremath{\mathit{End}}}
\newcommand{\V}{\ensuremath{\mathbb{V}}}
\newcommand{\N}{\ensuremath{\mathbb{N}}}
\newcommand{\s}{\ensuremath{\mathcal{S}}}
\newcommand{\iS}[1]{{\bf (iS#1)}}
\renewcommand{\S}[1]{{\bf (S#1)}}
\newcommand{\ito}{\dasharrow}
\begin{document}

\title[Intrinsic Schreier split extensions]{Intrinsic Schreier split extensions}

\author[Andrea Montoli]{Andrea Montoli}
\address{Dipartimento di Matematica ``Federigo Enriques'', Universit\`{a} degli
Studi di Milano, Via Saldini 50, 20133 Milano, Italy}
\thanks{The first author was partially supported by
the Programma per Giovani Ricercatori ``Rita Levi-Montalcini'', funded by the Italian government through MIUR}
\email{andrea.montoli@unimi.it}

\author[Diana Rodelo]{Diana Rodelo}
\address{Departamento de Matem\'atica, Faculdade de Ci\^{e}ncias e Tecnologia, Universidade
do Algarve, Campus de Gambelas, 8005-139 Faro, Portugal and CMUC, Department of Mathematics, University of Coimbra, 3001-501 Coimbra, Portugal}
\thanks{The second author acknowledges partial financial assistance by the Centre for Mathematics of the
University of Coimbra -- UID/MAT/00324/2013, funded by the Portuguese
Government through FCT/MEC and co-funded by the European Regional
Development Fund through the Partnership Agreement PT2020}
\email{drodelo@ualg.pt}

\author[Tim Van~der Linden]{Tim Van~der Linden}
\address{Institut de
Recherche en Math\'ematique et Physique, Universit\'e catholique
de Louvain, che\-min du cyclotron~2 bte~L7.01.02, 1348
Louvain-la-Neuve, Belgium}
\thanks{The third author is a Research
Associate of the Fonds de la Recherche Scientifique--FNRS}
\email{tim.vanderlinden@uclouvain.be}

\keywords{Fibration of points; jointly extremal-epimorphic pair;
regular, unital, protomodular category; monoid; J\'{o}nsson--Tarski variety}

\subjclass[2010]{20M32, 20J15, 18E99, 03C05, 08C05}

\begin{abstract}
In the context of regular unital categories we introduce an intrinsic version of the notion of a \emph{Schreier split epimorphism}, originally considered for monoids.

We show that such split epimorphisms satisfy the same homological properties as Schreier split epimorphisms of monoids do. This gives rise to new examples of $\s$-protomodular categories, and allows us to better understand the homological behaviour of monoids from a categorical perspective.
\end{abstract}

\date{\today}

\maketitle

\section{Introduction}

Schreier extensions of monoids, introduced in~\cite{Redei}, have been studied by Patchkoria in~\cite{Patchkoria77a, Patchkoria77b} in connection with the cohomology of monoids with coefficients in semimodules. Indeed, Patchkoria's second cohomology monoids can be described in terms of Schreier extensions. Moreover, Schreier split extensions correspond actually to monoid actions~\cite{Patchkoria,BM-FMS}, where an action of a monoid~$B$ on a monoid $X$ is a monoid homomorphism from $B$ to the monoid $\End(X)$ of endomorphisms of~$X$. These split extensions turned out to have the classical homological properties of split extensions of groups, such as the Split Short Five Lemma (see~\cite{SchreierBook, BM-FMS2} for more details on these properties).

In order to better understand this phenomenon of a distinguished class of (split) extensions of monoids behaving as (split) extensions of groups, Schreier extensions of monoids have been studied from a categorical point of view. The category of groups is protomodular~\cite{Bourn1991}, while the category of monoids is not. This led to the study of the notion of protomodularity relativised with respect to a suitable class $\s$ of split epimorphisms, giving rise to the notion of an $\s$-protomodular category~\cite{S-proto}, having the category of monoids with Schreier split epimorphisms as a key example. Later, in~\cite{MartinsMontoliSH} it was shown that every J\'{o}nsson--Tarski variety of universal algebras~\cite{JT} is an $\s$-protomodular category with respect to (a suitable generalisation of) the class of Schreier split epimorphisms. These categories satisfy relative versions of the basic properties of protomodular categories.

However, this categorical description of the homological properties of Schreier extensions of monoids is not entirely satisfactory. The definition of a Schreier (split) extension is not categorical, because it depends crucially on the element-wise approach involving a \emph{Schreier retraction}, which is just a set-theoretical map (rather than a morphism of monoids). Moreover, for the same category, there may be several different classes of split epimorphisms that give rise to a structure of an $\s$-protomodular category: in the case of monoids, some of such different classes have been identified (see~\cite{SchreierBook} and~\cite{BM-Nine-lemma} for a description of these examples). For these reasons, the notion of an $\s$-protomodular category is able to capture only some of the (very strong) homological properties of Schreier split epimorphisms. Indeed, this notion covers other situations that are not so well-behaved.

The aim of the present paper is to give a characterisation of Schreier split epimorphisms in completely categorical terms, without using elements. In order to do that, we use \emph{imaginary morphisms}---in the sense of Bourn and Janelidze, see~\cite{AMO, DB-ZJ-2009, DB-ZJ-2009b, RVdL4}---for the categorical Schreier retractions. The advantage of obtaining this characterisation is two-fold. On one hand, this approach may allow a sharper categorical interpretation of the homological properties of Schreier extensions than the one obtained through the notion of an $\s$-protomodular category. On the other hand, our notion of \emph{intrinsic Schreier split extension}, being categorical, can be considered in a wider context than that of J\'{o}nsson--Tarski varieties, namely in regular unital~\cite{B0} categories (under some additional assumptions). This may allow us to develop a meaningful cohomology theory for regular unital categories, which on one hand extends the well-established cohomology theory for semi-abelian categories~\cite{Bourn-Rodelo}, and on the other hand interprets categorically Patchkoria's cohomology of monoids. This is material for future work.

\section{Schreier split extensions of monoids}\label{Schreier split extensions of monoids}
In this section we recall from~\cite{SchreierBook, BM-FMS2} the main definitions and properties concerning Schreier split extensions.

A split epimorphism of monoids $f$ with chosen section $s$ and kernel $K$
\begin{equation}\label{split ext of monoids}
 \xymatrix@!0@=5em{ K \ar@{ |>->}[r]_-{k} & (X,\cdot,1) \ar@<-.5ex>@{>>}[r]_-{f} & Y \ar@<-.5ex>[l]_-{s}}
\end{equation}
is called a \defn{Schreier split epimorphism} if, for every $x\in X$, there exists a unique element $a\in K$ such that $x=k(a)\cdot sf(x)$. Equivalently, if there exists a unique function $q\colon X\dashrightarrow K$ such that $x=kq(x)\cdot sf(x)$ for all $x\in X$. We emphasise the fact that $q$ is just a function (not necessarily a morphism of monoids) by using an arrow of type $\dashrightarrow$.

The uniqueness property may be replaced~\cite[Proposition~2.4]{BM-FMS2} by an extra condition on $q$: the couple $(f,s)$ is a Schreier split epimorphism if and only if
\begin{itemize}
\item[\S1] $x=kq(x)\cdot sf(x)$, for all $x\in X$;
\item[\S2] $q(k(a)\cdot s(y))=a$, for all $a\in K$, $y\in Y$.
\end{itemize}

\begin{remark}\label{homogeneous}Recall from~\cite{SchreierBook}
that Schreier split epimorphisms are also called \defn{right homogenous} split epimorphisms. A split epimorphism as in~\eqref{split ext of monoids} is called \defn{left homogenous} if, for every $x\in X$, there exists a unique element $a\in K$ such that $x=sf(x)\cdot k(a)$. It is called \defn{homogeneous} when it is both left and right homogeneous.
\end{remark}

\begin{proposition}\cite[Proposition~2.1.5]{SchreierBook} \label{basic pps}
Given a Schreier split extension as in~\eqref{split ext of monoids}, the following hold:
\begin{itemize}
\item[\S3] $qk=1_K$;
\item[\S4] $qs=0$;
\item[\S5] $q(1)=1$;
\item[\S6] $kq(s(y)\cdot k(a))\cdot s(y)=s(y)\cdot k(a)$, for all $a\in K$, $y\in Y$.
\end{itemize}
\end{proposition}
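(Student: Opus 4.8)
The plan is to derive each of \S3--\S6 directly from the reformulation \S1--\S2 of the Schreier condition, using nothing beyond the facts that $k$, $s$, $f$ are monoid homomorphisms and that $fs=1_Y$, $fk=0$ (since $k$ is the kernel of $f$ and $s$ a section of it). In particular I shall use freely that homomorphisms preserve units, so that $s(1)=1$ and $k(1)=1$ in $X$.

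First I would obtain \S3 and \S4 as the two ``degenerate'' instances of \S2. Taking $y=1$ in \S2 and using $k(a)\cdot s(1)=k(a)$ gives $qk(a)=a$ for all $a\in K$, that is, $qk=1_K$. Symmetrically, taking $a=1$ in \S2 and using $k(1)\cdot s(y)=s(y)$ gives $qs(y)=1$ for all $y\in Y$, that is, $qs=0$. Property \S5 is then a one-line consequence: the unit of $X$ equals $k(1)$ (and also $s(1)$), so $q(1)=qk(1)=1$ by \S3.

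For \S6 I would apply \S1 to the element $x=s(y)\cdot k(a)$, which yields $x=kq(x)\cdot sf(x)$; it only remains to identify $sf(x)$. Since $f$ is a homomorphism, $f(x)=fs(y)\cdot fk(a)=y\cdot 1=y$, using $fs=1_Y$ and $fk=0$, whence $sf(x)=s(y)$ and \S1 becomes exactly the identity $kq(s(y)\cdot k(a))\cdot s(y)=s(y)\cdot k(a)$ asserted in \S6. I do not anticipate a genuine obstacle here: the whole argument is formal manipulation with the two axioms \S1--\S2. The only points to be careful about are to reason from the reformulation \S1--\S2 (via \cite[Proposition~2.4]{BM-FMS2}) rather than from the original uniqueness formulation, and to keep track of which ``$1$'' (the unit of $X$, of $Y$, or of $K$) occurs at each step.
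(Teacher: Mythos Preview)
Your argument is correct. The paper itself does not supply a proof of this proposition---it is quoted from \cite[Proposition~2.1.5]{SchreierBook}---so there is no ``paper's own proof'' to compare against directly. That said, your strategy (specialising \S2 at $y=1$ and at $a=1$ to get \S3 and \S4, reading off \S5, and instantiating \S1 at $x=s(y)\cdot k(a)$ to get \S6) is exactly the pattern the paper follows when it proves the \emph{intrinsic} analogues \iS3--\iS6 in Proposition~\ref{iretraction}: there \iS3 and \iS4 are obtained by precomposing \iS2 with $P(\langle 1_K,0\rangle)$ and $P(\langle 0,1_Y\rangle)$ respectively, \iS5 is declared obvious, and \iS6 is obtained from \iS1. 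So your elementary argument is precisely the element-wise shadow of the categorical proof the authors give later.
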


We say that a split epimorphism (with fixed section $s$) is a \defn{strongly split epimorphism}~\cite{B9} (see also~\cite{MartinsMontoliSobral2}, where the same notion was considered, in the regular context, under the name of \emph{regular point}) if its kernel~$k$ and section~$s$ form a jointly extremal-epimorphic pair $(k,s)$. It is \defn{stably strong}~\cite{2Chs} if every pullback of it along any morphism is a strongly split epimorphism (with the section induced by $s$).

\begin{lemma}\cite[Lemma~2.1.6]{SchreierBook} \label{Schreier => strong}
Any Schreier split epimorphism is strongly split.
\end{lemma}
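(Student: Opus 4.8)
The plan is to unwind the definition of a strongly split epimorphism directly in \Mon{} and to use the Schreier retraction $q$ as a witness that any monomorphism through which $k$ and $s$ both factor is surjective. So, start with a Schreier split epimorphism $(f,s)$ as in~\eqref{split ext of monoids} with kernel $k$, and suppose $k$ and $s$ both factor through a monomorphism $m\colon M\rightarrowtail X$, say $k=mk'$ and $s=ms'$ for monoid homomorphisms $k'\colon K\to M$ and $s'\colon Y\to M$. The goal is to show that $m$ is an isomorphism; this is precisely the statement that $(k,s)$ is a jointly extremal-epimorphic pair.

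The key step uses condition \S1: for every $x\in X$ we have $x=kq(x)\cdot sf(x)$. Substituting the factorisations and using that $m$ is a homomorphism of monoids, this rewrites as
\[
x = m(k'q(x))\cdot m(s'f(x)) = m\bigl(k'q(x)\cdot s'f(x)\bigr),
\]
so every element of $X$ lies in the image of $m$. Hence $m$ is a surjective homomorphism of monoids; being also a monomorphism in \Mon{}, it is injective, and therefore an isomorphism. This shows $(k,s)$ is jointly extremal-epimorphic, i.e. that $(f,s)$ is strongly split.

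I do not expect a genuine obstacle here: the only point requiring a little care is the passage from ``$m$ is a surjective monomorphism of monoids'' to ``$m$ is an isomorphism'', which relies on monomorphisms of monoids being injective maps; everything else is a direct application of \S1 together with the functoriality of $m$. If one prefers an element-free formulation, the same computation exhibits $m$ as a regular epimorphism, and a morphism that is simultaneously a monomorphism and a regular epimorphism is an isomorphism, so the argument adapts verbatim to the regular context.
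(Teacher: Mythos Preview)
Your argument is correct. Note, however, that the paper does not actually supply a proof of this lemma: it simply cites \cite[Lemma~2.1.6]{SchreierBook} and moves on. So there is no ``paper's own proof'' to compare against directly.

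That said, your approach matches the spirit of the intrinsic version proved later as Proposition~\ref{strong}: there the authors observe that \iS1 forces $\bi{kq}{sf\varepsilon_X}$, and hence $\bi{k}{s}\colon K+Y\to X$, to be a regular epimorphism, which is exactly the coproduct formulation of ``$(k,s)$ is jointly extremal-epimorphic''. Your element-wise computation is the concrete incarnation of this in $\Mon$: \S1 exhibits every $x\in X$ as a product of something in the image of $k$ with something in the image of $s$, so any monomorphism through which both factor is surjective. The only subtlety you flag---that a surjective monomorphism of monoids is an isomorphism---is handled by the standard fact that monomorphisms in $\Mon$ (indeed in any variety) are injective, and your alternative phrasing via ``monomorphism $+$ regular epimorphism $\Rightarrow$ isomorphism'' is precisely how the argument generalises to the regular setting of Section~\ref{ISSE}.
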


It is easy to see that every strongly split epimorphism $(f, s)$ is such that~$f$ is the cokernel of its kernel, hence it gives rise to a split extension. The split extension~\eqref{split ext of monoids} is then called a \defn{Schreier split extension} and the map $q$ is called the associated \defn{(Schreier) retraction}. It is indeed a retraction, by \textbf{(S3)} above.

Actually any Schreier split epimorphism is stably strong, since Schreier split epimorphisms are stable under pullbacks:

\begin{proposition}\cite[Proposition~2.3.4]{SchreierBook}\label{Schreier stable for pbs}
Schreier split epimorphisms are stable under pullbacks along arbitrary morphisms.
\end{proposition}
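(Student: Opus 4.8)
The plan is to take a Schreier split epimorphism $(f,s)$ with kernel $k\colon K\to X$ and Schreier retraction $q\colon X\dashrightarrow K$, pull it back along an arbitrary morphism $g\colon Y'\to Y$, and exhibit a Schreier retraction for the pulled-back split epimorphism. Concretely, form the pullback
\begin{equation*}
 \xymatrix@!0@=5em{ K \ar@{ |>->}[r]_-{k'} & X' \ar@<-.5ex>@{>>}[r]_-{f'} \ar[d]_-{\bar g} & Y' \ar@<-.5ex>[l]_-{s'} \ar[d]^-{g} \\ & X \ar[r]_-{f} & Y, }
\end{equation*}
where $X'=\{(x,y')\in X\times Y' \mid f(x)=g(y')\}$, $f'(x,y')=y'$, $s'(y')=(sg(y'),y')$, $\bar g(x,y')=x$, and the kernel of $f'$ is $k'(a)=(k(a),1)$ (identifying $K$ with the kernel of $f'$ via $\bar g$, using that $g$ and $s$ preserve the unit). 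I would then define $q'\colon X'\dashrightarrow K$ by $q'(x,y') := q(x)$.

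The key steps, in order, are: first, check that $q'$ is well defined and lands in $K$ (immediate, since $q$ does). Second, verify condition \S1 for $(f',s')$: for $(x,y')\in X'$ we need $(x,y') = k'q'(x,y')\cdot s'f'(x,y') = (kq(x)\cdot sf(x),\, y')$; the second coordinate is clear, and the first coordinate is exactly \S1 for $(f,s)$, using $f(x)=g(y')$ so that $sf(x)=sg(y')$, which is the first component of $s'(y')$. Third, verify condition \S2: for $a\in K$ and $y'\in Y'$,
\begin{equation*}
 q'\bigl(k'(a)\cdot s'(y')\bigr) = q'\bigl(k(a)\cdot sg(y'),\, y'\bigr) = q\bigl(k(a)\cdot sg(y')\bigr) = a,
\end{equation*}
where the last equality is \S2 for $(f,s)$ applied with $y=g(y')\in Y$. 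By the characterisation of Schreier split epimorphisms via \S1 and \S2 recalled in the excerpt, this shows $(f',s')$ is a Schreier split epimorphism. Finally, I would remark that stability under pullbacks, combined with Lemma~\ref{Schreier => strong}, immediately yields that every Schreier split epimorphism is stably strong, as claimed in the paragraph preceding the statement.

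There is no serious obstacle here; the proof is a routine element-wise computation. The only point requiring a little care is the bookkeeping in identifying the kernel of $f'$ and the section $s'$ in the explicit pullback $X'=X\times_Y Y'$, and checking that the relevant equalities of elements of $X'$ reduce, coordinatewise, to the already-established identities \S1 and \S2 for the original Schreier split epimorphism. One should also note in passing that the Schreier retraction of a Schreier split epimorphism is unique, so $q'$ is \emph{the} Schreier retraction of $(f',s')$, and it is compatible with $\bar g$ in the sense that $q\bar g = q'$.
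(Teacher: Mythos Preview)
Your argument is correct and is precisely the standard element-wise verification one would expect: define $q'=q\bar g$ and reduce \S1 and \S2 for the pullback to \S1 and \S2 for the original point, using $f(x)=g(y')$ to rewrite $sg(y')$ as $sf(x)$. Note, however, that the paper does not actually give a proof of this proposition; it is merely cited from~\cite[Proposition~2.3.4]{SchreierBook}, so there is nothing to compare against here beyond observing that your proof is the expected one.
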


\begin{corollary}\label{Schreier => stably strong}
Any Schreier split epimorphism is stably strong.
\end{corollary}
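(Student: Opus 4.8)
The plan is to obtain this as a formal consequence of the two statements immediately preceding it: Proposition~\ref{Schreier stable for pbs} (Schreier split epimorphisms are pullback-stable) and Lemma~\ref{Schreier => strong} (Schreier split epimorphisms are strongly split). Indeed, ``stably strong'' asks that \emph{every} pullback be strongly split, and we already know that every pullback is again Schreier, so it suffices to feed that pullback into Lemma~\ref{Schreier => strong}.

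Concretely, I would fix a Schreier split epimorphism $(f,s)$ as in~\eqref{split ext of monoids} together with an arbitrary morphism $g\colon Z\to Y$, and form the pullback of $f$ along $g$. This yields a split epimorphism $(\bar f,\bar s)$ over $Z$, where $\bar s$ is the section induced by $s$ and the universal property of the pullback --- precisely the section referred to in the definition of ``stably strong''. By Proposition~\ref{Schreier stable for pbs}, the pair $(\bar f,\bar s)$ is again a Schreier split epimorphism; here one should take a moment to record that the retraction carried by $(\bar f,\bar s)$ is the one transported from the retraction $q$ of $(f,s)$, so that there is no ambiguity about which Schreier structure on the pullback is in play. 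Lemma~\ref{Schreier => strong} then applies verbatim to $(\bar f,\bar s)$, giving that the kernel of $\bar f$ and the section $\bar s$ form a jointly extremal-epimorphic pair, i.e.\ that $(\bar f,\bar s)$ is strongly split. As $g$ was arbitrary, $(f,s)$ is stably strong.

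I do not expect a genuine obstacle: the only thing to be careful about is the bookkeeping in matching the pullback square, the induced section $\bar s$, and the transported retraction with the data demanded by the definitions of ``strongly split'' and ``stably strong''. Once that is in place the conclusion is immediate, which is no doubt why the result is phrased as a corollary rather than a proposition.
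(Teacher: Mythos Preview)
Your argument is correct and matches the paper's own reasoning: the corollary is an immediate consequence of Lemma~\ref{Schreier => strong} and Proposition~\ref{Schreier stable for pbs}, exactly as you outline. The paper does not even spell out a proof, since the sentence preceding Proposition~\ref{Schreier stable for pbs} already indicates that stable strength follows from pullback-stability together with strength.
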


Some examples of Schreier split extensions are given by direct products:

\begin{proposition}\cite[Proposition~2.2.1]{SchreierBook}\label{Schreier products}
A split extension underlying a product of monoids
$$
 \xymatrix@!0@=5em{ X \ar@{ |>->}[r]_-{\langle 1_X, 0\rangle } & X\times Y \ar@<-.5ex>@{>>}[r]_-{\pi_Y} & Y \ar@<-.5ex>[l]_-{\langle0,1_Y\rangle}}
$$
is always a Schreier split extension.
\end{proposition}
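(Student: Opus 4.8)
The plan is to verify the Schreier conditions \S1 and \S2 directly for the product split extension, exhibiting the candidate retraction explicitly. For the split epimorphism
$$
 \xymatrix@!0@=5em{ X \ar@{ |>->}[r]_-{\langle 1_X, 0\rangle } & X\times Y \ar@<-.5ex>@{>>}[r]_-{\pi_Y} & Y \ar@<-.5ex>[l]_-{\langle0,1_Y\rangle}}
$$
the kernel is $k=\langle 1_X,0\rangle$ and the section is $s=\langle 0,1_Y\rangle$, so that $sf=s\pi_Y=\langle 0,\pi_Y\rangle\colon X\times Y\to X\times Y$, $(x,y)\mapsto (1,y)$. The natural candidate for the retraction is the first projection $q=\pi_X\colon X\times Y\dashrightarrow X$, which is of course an honest monoid homomorphism in this case.

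First I would check \S1: for $(x,y)\in X\times Y$ we have $kq(x,y)\cdot sf(x,y)=(x,1)\cdot(1,y)=(x\cdot 1, 1\cdot y)=(x,y)$, using that multiplication in $X\times Y$ is componentwise and $1$ is the identity of both $X$ and $Y$. Then I would check \S2: for $a\in X$ and $y\in Y$, $q(k(a)\cdot s(y))=\pi_X\big((a,1)\cdot(1,y)\big)=\pi_X(a,y)=a$. Both conditions hold, so by the characterisation of Schreier split epimorphisms via \S1--\S2 recalled in Section~\ref{Schreier split extensions of monoids}, the pair $(\pi_Y,\langle 0,1_Y\rangle)$ is a Schreier split epimorphism. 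It remains to note that this split epimorphism does underlie a split extension: since $\pi_X$ is a retraction of $k$ (condition \S3), one sees that $k=\langle 1_X,0\rangle$ is indeed the kernel of $\pi_Y$ and, by Lemma~\ref{Schreier => strong}, the pair is strongly split, hence $\pi_Y$ is the cokernel of $k$; thus we genuinely have a Schreier split extension.

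There is essentially no obstacle here: the only mild subtlety is being careful about which projection plays the role of the Schreier retraction and confirming that $\pi_X$ is merely required to be a set-map (it happens to be a homomorphism, which is fine, as homomorphisms are in particular functions). One could alternatively deduce the result from Proposition~\ref{Schreier stable for pbs} by viewing $X\times Y\to Y$ as the pullback along $Y\to 1$ of the trivial Schreier split extension $X\to 1$, but the direct computation above is shorter and self-contained.
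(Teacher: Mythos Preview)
Your proof is correct: taking $q=\pi_X$ and verifying \S1 and \S2 directly is the natural and standard argument. Note, however, that the paper does not actually supply its own proof of this proposition; it simply cites it from~\cite{SchreierBook}, so there is nothing in the paper to compare against. Your verification is exactly what one would expect the cited proof to contain, and your closing remark about the pullback-stability alternative is also valid (and foreshadows the intrinsic analogue the paper proves later in Proposition~\ref{binary product}).
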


\begin{corollary}\label{Schreier terminal and id}
Any terminal split extension and identity split extension
$$
 \xymatrix@!0@=4em{X \ar@{ |>->}[r]_-{1_X} & X \ar@<-.5ex>@{>>}[r]_-{!_X} & 0 \ar@<-.5ex>[l]_-{0_X}}
 \qquad\text{and}\qquad
 \xymatrix@!0@=4em{0 \ar@{ |>->}[r]_-{0_X} & X \ar@<-.5ex>@{>>}[r]_-{1_X} & X \ar@<-.5ex>[l]_-{1_X}}
$$
is a Schreier split extension.
\end{corollary}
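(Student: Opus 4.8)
The plan is to deduce both statements from Proposition~\ref{Schreier products} by taking the zero monoid $0$ as one of the two factors. The canonical isomorphisms $X\times 0\cong X$ and $0\times X\cong X$ identify the terminal split extension of $X$ with the product split extension of $X$ and $0$, and the identity split extension of $X$ with the product split extension of $0$ and $X$; since being a Schreier split extension is visibly preserved under isomorphisms of split extensions, both assertions follow at once.

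Alternatively --- and perhaps more transparently --- I would verify conditions \S1 and \S2 directly, which suffices by~\cite[Proposition~2.4]{BM-FMS2}. For the terminal split extension we have $k=1_X$, $f=!_X$ and $s=0_X$, so that $sf(x)=1$ for every $x\in X$; hence $q=1_X\colon X\dashrightarrow X$ satisfies $kq(x)\cdot sf(x)=x\cdot 1=x$, which is \S1, while \S2 becomes $q(k(a)\cdot s(y))=q(a\cdot 1)=a$ and holds trivially (note that $y=0$ necessarily, since $Y=0$). For the identity split extension we have $k=0_X\colon 0\to X$ and $f=s=1_X$, with kernel the zero monoid; the unique function $q\colon X\dashrightarrow 0$ then satisfies $kq(x)\cdot sf(x)=1\cdot x=x$, which is \S1, and \S2 is immediate since its only instance has both sides equal to the unique element of $0$.

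I do not expect any genuine obstacle here: the corollary is an immediate specialisation of Proposition~\ref{Schreier products}, the only (entirely routine) point being to match the data of the two split extensions with that of a product split extension through the isomorphisms $X\times 0\cong X\cong 0\times X$, or else to exhibit the Schreier retractions $q$ explicitly as above and check \S1--\S2.
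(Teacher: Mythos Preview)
Your proposal is correct and matches the paper's approach: the corollary is stated without proof as an immediate consequence of Proposition~\ref{Schreier products}, which is precisely your first argument via the isomorphisms $X\times 0\cong X\cong 0\times X$. Your alternative direct verification of \S1 and \S2 is also fine but not needed.
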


Several other examples of Schreier split extensions are considered in~\cite{SchreierBook}. A~useful property of such split extensions is the following:

\begin{proposition}\cite[Lemma~4.1]{BM-FMS2}\label{Schreier compatibility of q}
Any morphism between two Schreier split extensions
$$
 \xymatrix@!0@C=5em@R=4em{ K \ar@{ |>->}@<-.5ex>[r]_-k \ar[d]_-{\widetilde{g}} & X \ar@<-.5ex>@{>>}[r]_-f \ar[d]_-g \ar@<-.5ex>@{-->}[l]_-q & Y \ar@<-.5ex>[l]_-s \ar[d]^-h \\
 K' \ar@{ |>->}@<-.5ex>[r]_-{k'} & X' \ar@<-.5ex>@{>>}[r]_-{f'} \ar@<-.5ex>@{-->}[l]_-{q'} & Y' \ar@<-.5ex>[l]_-{s'}}
$$
is compatible with respect to their retractions, i.e., $\widetilde{g}q=q'g$.
\end{proposition}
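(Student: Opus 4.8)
The plan is to exploit the defining conditions \S1 and \S2 for both Schreier split extensions, together with the fact that the given diagram is a morphism of split extensions (so $gk=k'\widetilde g$, $f'g=hf$, $gs=s'h$). The strategy is to show that the two set-theoretical maps $\widetilde g q$ and $q' g$ from $X$ to $K'$ agree pointwise, by evaluating on an arbitrary $x\in X$ and using the uniqueness clause in the definition of a Schreier split epimorphism for the bottom row.

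First I would fix $x\in X$ and apply $g$ to the identity $x = kq(x)\cdot sf(x)$ coming from \S1 for the top extension. Using that $g$ is a monoid homomorphism and the commutativity relations $gk=k'\widetilde g$ and $gs=s'h$, this yields
\[
 g(x) = g(kq(x))\cdot g(sf(x)) = k'\widetilde g q(x)\cdot s' h f(x) = k'(\widetilde g q(x))\cdot s' f'(g(x)),
\]
where in the last step I use $hf=f'g$. Now $\widetilde g q(x)$ is an element of $K'$, so this is exactly a decomposition of $g(x)$ of the form required by \S1 for the bottom extension. On the other hand, \S1 for the bottom extension gives the decomposition $g(x) = k'q'(g(x))\cdot s'f'(g(x))$. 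By the uniqueness part of the definition of a Schreier split epimorphism applied to $(f',s')$, the element of $K'$ appearing in such a decomposition is unique; hence $\widetilde g q(x) = q' g(x)$. Since $x$ was arbitrary, $\widetilde g q = q' g$.

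I do not anticipate a serious obstacle here: the only thing to be careful about is making sure all three squares of the morphism of split extensions are used in the right places (the left square to move $\widetilde g$ past $k$, the right square to identify $hf$ with $f'g$, and the section square to move $h$ past $s'$), and that we invoke uniqueness for the \emph{codomain} extension rather than the domain one. As an alternative, one could instead verify the pair of conditions \S1 and \S2 for the composite and appeal to the equivalent characterisation, but the uniqueness argument above is shorter and cleaner. If one prefers to avoid uniqueness altogether, the identity also follows by precomposing with $k$ and $s$ and using \S3, \S4 from Proposition~\ref{basic pps} together with the fact that $(k,s)$ is jointly extremal-epimorphic (Lemma~\ref{Schreier => strong}); but this is more laborious than necessary.
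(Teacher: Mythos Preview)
Your main argument is correct and is the standard one. Note, however, that the present paper does not give a proof of this proposition at all: it is merely quoted from~\cite[Lemma~4.1]{BM-FMS2}, so there is no ``paper's own proof'' to compare against here. Your approach is exactly what one expects, and is presumably what appears in the cited reference.

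One caveat concerning your final remark: the alternative you sketch---showing $\widetilde g q$ and $q'g$ agree after precomposing with $k$ and with $s$, and then invoking that $(k,s)$ is jointly extremal-epimorphic---does not work as stated. The maps $q$, $q'$, and hence $\widetilde g q$ and $q'g$, are only set-theoretical functions, not monoid homomorphisms; the extremal-epimorphicity of $(k,s)$ is a categorical property about \emph{morphisms} in $\Mon$, and it does not let you cancel against arbitrary functions. Knowing two functions agree on $k(K)$ and $s(Y)$ tells you nothing about general elements of $X$, since $X$ is typically not the union of those two subsets. So the uniqueness argument you give first is not just shorter---it is the argument that actually works.
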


\section{$\s$-protomodular categories}

In this section we recall the definition of an $\s$-protomodular category, with respect to a class $\s$ of points in a category with a zero object.

Let $\C$ be a finitely complete category. A \defn{point} in $\C$ is a
split epimorphism~$f$ with a chosen section~$s$:
\[
\xymatrix@!0@=4em{X \ar@<-.5ex>@{>>}[r]_f & Y, \ar@<-.5ex>[l]_s}\qquad\qquad
fs=1_{Y}.
\]
We say that a point is \defn{(stably) strong} when it is a (stably) strongly split epimorphism. Consequently, its kernel $k$ and section $s$ form a jointly extremal-epimorphic pair $(k,s)$, and $(f,s)$ is part of a split extension.

We denote by $\Pt(\C)$ the category of points in $\C$, whose morphisms are pairs of morphisms which form commutative squares with both the split epimorphisms and their sections. The functor $\cod\colon \Pt(\C) \to \C$ associates with every split epimorphism its codomain. It is a fibration, usually called the \defn{fibration of points}. For each object $Y$ of $\C$, we denote by $\Pt_Y(\C)$ the fibre of this fibration, whose objects are the points with codomain $Y$.

Let $\s$ be a class of points in $\C$ which is stable under pullbacks along any morphism. If we look at it as a full
subcategory $\SPt(\C)$ of $\Pt(\C)$, then it gives rise to a subfibration $\s$-$\cod$ of the fibration of points.

\begin{definition}\cite[Definition~8.1.1]{SchreierBook}\label{S-protomodular category}
Let $\C$ be a pointed finitely complete category, and $\s$ a
pullback-stable class of points. We say that $\C$ is
\defn{$\s$-proto\-modular} when:
\begin{itemize}
\item[(1)] every point in $\SPt(\C)$ is a strong point;
\item[(2)]$\SPt(\C)$ is closed under finite limits in $\Pt(\C)$.
\end{itemize}
\end{definition}

\begin{example}\cite{SchreierBook}
The category $\Mon$ of monoids is $\s$-protomodular with respect to the class $\s$ of Schreier split epimorphism.
\end{example}

\begin{example}\cite{MartinsMontoliSH}
We recall that a variety of universal algebras is called a \defn{J\'onsson--Tarski} variety~\cite{JT} when its theory contains a unique constant $0$ and a binary operation $+$ such that $x+0=x=0+x$. So an algebra is a unitary magma, possibly equipped with additional operations. When treating the examples of groups and monoids, we shall write the binary J\'onsson--Tarski operation as a product. 

Every J\'onsson--Tarski variety is an $\s$-protomodular category with respect to the class of Schreier split epimorphisms. Indeed, the definition of a Schreier split epimorphism makes sense also in this wider context, and it gives rise to a whole family of examples of $\s$-protomodular categories.
\end{example}

\section{Imaginary morphisms}
The technique of \emph{imaginary morphisms} stems from the work of Bourn and Janelidze~\cite{AMO, DB-ZJ-2009, DB-ZJ-2009b}; it was further explored in \cite{RVdL4} by the second and third authors of the present article. Here we use imaginary morphisms in order to capture certain characteristic properties of J\'onsson--Tarski varieties and to define an intrinsic version of the concept of a Schreier retraction.

Here we assume that $\C$ is a regular category with enough (regular) projectives and that we can choose projective covers functorially. We write $\varepsilon_X\colon {P(X)\twoheadrightarrow X}$ for the chosen projective cover of some object $X$ in~$\C$: $\varepsilon_X$ is a regular epimorphism and $P(X)$ is a projective object, which means that for any morphism $z\colon P(X)\to Z$ and any regular epimorphism $f\colon{Y\twoheadrightarrow Z}$, there exists a morphism $y\colon {P(X)\to Y}$ in~$\C$ such that $fy=z$. In what follows, it will be convenient for us to let $P$ be part of a comonad $(P,\delta,\varepsilon)$; we say that $\C$ is equipped with \defn{functorial (comonadic) projective covers}. Note that for any morphism $f\colon X\to Y$ in $\C$
\begin{equation}\label{naturality of epsilon}
 f\varepsilon_X=\varepsilon_YP(f)
\end{equation}
and
\begin{equation}\label{naturality of delta}
 P^2(f)\delta_X=\delta_YP(f),
\end{equation}
where $P^2=PP$. Also
\begin{equation}\label{counit}
 \varepsilon_{P(X)} \delta_X=1_{P(X)}=P(\varepsilon_X)\delta_X
\end{equation}
and
\begin{equation}\label{coassociative}
 P(\delta_X)\delta_X = \delta_{P(X)}\delta_X,
\end{equation}
for all objects $X$ in $\C$.

\begin{example}\label{Varieties Comonad}
If $\V$ is a variety of universal algebras, then we may consider the free algebra comonad $(P,\delta,\varepsilon)$. For any algebra $X$, we have
$$
\begin{array}{rclcrcl}
\varepsilon_X \colon P(X) & \twoheadrightarrow & X & \mathrm{and} & \delta_X \colon P(X) & \hookrightarrow & P^2(X), \\
   \left[x\right] & \mapsto & x & &  [x] & \mapsto & \left[ [x] \right]
\end{array}
$$
where $[x]$ denotes the one letter word $x$, which are the generators of $P(X)$. In this case, any function $\overline{f}\colon{X\ito Y}$ between algebras $X$ and $Y$ extends uniquely to a morphism
$$
\begin{array}{rcl}
f \colon P(X) & \to & Y \\
   \left[x\right] & \mapsto & \overline{f}(x)
\end{array}
$$
in $\V$.	
\end{example}

This example motivates the following definition:

\begin{definition}\label{im}
A morphism $f\colon {P(X)\to Y}$ is called an \defn{imaginary morphism from $X$ to $Y$}; we write $\overline{f}\colon X\ito Y$.
\end{definition}

\begin{example}
In a variety of universal algebras $\V$ equipped with the free algebra comonad (Example~\ref{Varieties Comonad}), each function from an algebra $X$ to an algebra $Y$ may be considered as an imaginary morphism $X\ito Y$ in $\V$.	
\end{example}

An imaginary morphism $X\ito Y$ is not actually a morphism $X\to Y$ in~$\C$. Rather, it is a morphism in $\C$ with domain $P(X)$. Any real morphism $f\colon {X\to Y}$ may be considered as an imaginary morphism $\overline{f}\colon {X\ito Y}$, namely the composite $f \varepsilon_X$. In particular, $1_Y\colon Y\to Y$, considered as an imaginary morphism $Y\ito Y$, is $\varepsilon_Y\colon {P(Y)\to Y}$.

Actually, the imaginary morphisms are exactly the morphisms of the co-Kleisli category $\C_P$ induced by the comonad $(P,\delta,\varepsilon)$. There $\C_P(X,Y)=\C(P(X),Y)$ with the co-Kleisli composition. In the particular case where $\overline{f}\colon X\ito Y$ is an imaginary morphism and $g\colon Y\to Z$ and $h\colon W\to X$ are  morphisms in $\C$, they compose as in Figure~\ref{Figure Compositions}. 
\begin{figure}[h]
\begin{tabular}{c@{\qquad}c}
\toprule
imaginary composition & corresponding morphism in $\C$ \\
\hline
$\xymatrix{X \ar@{-->}[r]^-{\overline{f}} \ar@{-->}@/_1pc/[rr]_-{g\overline{f}} & Y \ar[r]^-g & Z}$ & $\xymatrix{P(X) \ar[r]^-f & Y \ar[r]^-g & Z}$ \\[5pt]
$\xymatrix{W \ar[r]^-h \ar@{-->}@/_1pc/[rr]_-{\overline{f}h} & X \ar@{-->}[r]^-{\overline{f}} & Y}$ & $\xymatrix{P(W) \ar[r]^-{P(h)} & P(X) \ar[r]^-f & Y}$\\
\bottomrule
\end{tabular}
\caption{Imaginary compositions}\label{Figure Compositions}
\end{figure}
The composition of the imaginary morphism $\overline{f}$ with the real morphism $g$ is precisely the composition of the imaginary morphism $\overline{f}$ with the corresponding imaginary morphism $\overline{g}=g\varepsilon_Y\colon P(Y) \to Z$ in the co-Kleisli category:
\[
\overline{g}\circ \overline{f}= g\varepsilon_YP(f)\delta_X \ontop{\eqref{naturality of epsilon}}{=}gf\varepsilon_{P(X)}\delta_X \ontop{\eqref{counit}}{=} gf.
\] 
Likewise,
\[
\overline{f}\circ \overline{h} = fP(h\varepsilon_W) \delta_W= fP(h) P(\varepsilon_W) \delta_W \ontop{\eqref{counit}}{=} fP(h).
\]

\begin{lemma}\label{Imaginary Splitting}
In a regular category with functorial projective covers, a morphism $f\colon {X\to Y}$ is a regular epimorphism if and only if it admits an imaginary splitting $\overline{s}\colon {Y\ito X}$. This means that $f \overline{s}=\overline{1_Y}\colon Y\ito Y$ or, equivalently, that $s\colon{P(Y)\to X}$ satisfies $f s=\varepsilon_Y$.
\end{lemma}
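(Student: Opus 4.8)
The plan is to prove both implications directly from the universal property of projective objects and the identities \eqref{naturality of epsilon}--\eqref{counit}.

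First I would prove the ``if'' direction, which is the easy one. Suppose $\overline{s}\colon Y\ito X$ is an imaginary splitting, i.e.\ a morphism $s\colon P(Y)\to X$ in $\C$ with $fs=\varepsilon_Y$. Since $\C$ is regular, $f$ factors as $f=me$ with $e$ a regular epimorphism and $m$ a monomorphism. Then $m(es)=fs=\varepsilon_Y$ is a regular epimorphism, hence in particular a strong (extremal) epimorphism; since it factors through the monomorphism $m$, it follows that $m$ is an isomorphism, so $f=me$ is a regular epimorphism.

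Next I would prove the ``only if'' direction. Assume $f\colon X\to Y$ is a regular epimorphism. Apply the defining property of the projective object $P(Y)$ to the morphism $\varepsilon_Y\colon P(Y)\to Y$ and the regular epimorphism $f\colon X\twoheadrightarrow Y$: there exists $s\colon P(Y)\to X$ with $fs=\varepsilon_Y$. This is by definition an imaginary splitting $\overline{s}\colon Y\ito X$, and the computation $f\overline{s}=f\circ\overline{s}=f s$ (the last step using that $\overline{s}$ composed with the real morphism $f$ is just $fs$, exactly as in the displayed co-Kleisli computations $\overline{g}\circ\overline{f}=gf$ preceding the lemma) equals $\varepsilon_Y=\overline{1_Y}$.

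Finally I would spell out the equivalence of the two formulations in the statement: $f\overline{s}=\overline{1_Y}\colon Y\ito Y$ means by definition of imaginary composition that $f s\delta_Y' = \varepsilon_Y$ after unravelling, but since $\overline{s}$ is already a morphism out of $P(Y)$ and $f$ is real, the co-Kleisli composite $f\circ\overline{s}$ is literally $fs\colon P(Y)\to Y$; and $\overline{1_Y}$ is $\varepsilon_Y$ as noted right before the lemma. Hence $f\overline{s}=\overline{1_Y}$ is the same equation as $fs=\varepsilon_Y$. I do not expect any real obstacle here; the only point requiring a little care is bookkeeping with the co-Kleisli composition, making sure that composing an imaginary morphism with a subsequent real morphism introduces no $\delta$ or $P$ — which is precisely the content of the first row of Figure~\ref{Figure Compositions}.
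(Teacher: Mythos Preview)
Your proof is correct and fills in the details that the paper omits: its proof reads only ``This is an immediate consequence of the definitions.'' Both directions you give (projectivity of $P(Y)$ for the forward implication, and the regular epi--mono factorisation together with the extremality of $\varepsilon_Y$ for the converse) are exactly the arguments one has in mind when declaring the result immediate, so there is no meaningful difference in approach.
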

\begin{proof}
	This is an immediate consequence of the definitions.
\end{proof}

A pointed and regular category with binary coproducts is \defn{unital}~\cite{B0, Borceux-Bourn} when, for all objects $A$, $B$, the comparison morphism
\[
r_{A,B}=\matriz{	1_A}{0}{0}{1_B}\colon A+B\twoheadrightarrow A\times B
\]
is a regular epimorphism. If the category has functorial projective covers, then, by Lemma~\ref{Imaginary Splitting}, this is equivalent to saying that $r_{A,B}$ admits an imaginary splitting
\[
\overline{t_{A,B}}\colon {A\times B\ito A+B},
\]
i.e., there exists a morphism $t_{A,B}\colon P(A\times B)\to A+B$ such that
\begin{equation}\label{im splitting}
 r_{A,B} t_{A,B}=\varepsilon_{A\times B}.
\end{equation}

\begin{example}\label{LeftRightImaginarySplitting}
A variety of universal algebras $\V$ is unital if and only if it is a J\'{o}nsson--Tarski variety~\cite{Borceux-Bourn}. In this case, for any pair of algebras $(A,B)$ in $\V$, we make the following canonical choices of imaginary splittings for $r_{A,B}$: the \defn{direct imaginary splitting} $t^d$
\[
[(a,b)]\mapsto \underline{a}+\overline{b}
\]
which sends a generator $[(a,b)]\in P(A\times B)$ to the sum of $\underline{a}=\iota_A(a)$ with $\overline{b}=\iota_B(b)$ in~${A+B}$,
and the \defn{twisted imaginary splitting} $t^w$
\[
[(a,b)]\mapsto \overline{b}+\underline{a}
\]
which does the same, but in the opposite order. Note that each of those choices determines a natural transformation
\[
t\colon P((\cdot)\times (\cdot))\To (\cdot)+(\cdot)
\]
such that $r t=\varepsilon_{(\cdot)\times (\cdot)}$, where $r\colon (\cdot)+(\cdot)\To (\cdot)\times (\cdot)$ and
\[
\varepsilon_{(\cdot)\times (\cdot)}\colon P((\cdot)\times(\cdot))\To (\cdot)\times(\cdot).
\]
\end{example}

\begin{definition}
In a pointed regular (unital) category $\C$ with binary co\-products and functorial projective covers, a \defn{natural imaginary splitting (of the comparison from sum to product)} is a natural transformation
\[
t\colon P((\cdot)\times (\cdot))\To (\cdot)+(\cdot)
\]
such that $r t=\varepsilon_{(\cdot)\times (\cdot)}$.
\end{definition}

Note that, by Lemma~\ref{Imaginary Splitting}, the existence of a natural imaginary splitting in a pointed regular category with binary coproducts and functorial projective covers implies that this category is unital. Any J\'onsson--Tarski variety comes equipped with a direct and a twisted natural imaginary splitting $t^d$ and $t^w$ as in Example~\ref{LeftRightImaginarySplitting}. On the other hand, outside the varietal context there seems to be no reasonable way to characterise these cases categorically. We show next that in the category of monoids these are the only two possible choices.

\begin{proposition}\label{td and tw} In the category of monoids, any natural imaginary splitting $t$ must be the direct or the twisted natural imaginary splitting.
\end{proposition}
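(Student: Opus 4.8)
The plan is to exploit the fact that, for the free monoid comonad on $\Mon$ (the free algebra comonad of Example~\ref{Varieties Comonad}), the object $P(A\times B)$ is the free monoid on the underlying set of $A\times B$. Consequently the component $t_{A,B}\colon P(A\times B)\to A+B$ of a natural imaginary splitting is a monoid homomorphism out of a free monoid, hence completely determined by its values $t_{A,B}([(a,b)])$ on the generators $[(a,b)]$, $(a,b)\in A\times B$. It therefore suffices to compute these values and to show that they are given, uniformly in $(A,B)$, by one of the two formulae of Example~\ref{LeftRightImaginarySplitting}.

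First I would pin down a single ``generic'' value. Take $A=B=\N$, the free monoid on one generator, and write $x$, $y$ for the generators of the two copies of $\N$. Since the free monoid functor preserves coproducts, $\N+\N$ is the free monoid on $\{x,y\}$, and $r_{\N,\N}\colon \N+\N\to\N\times\N$ sends a word $w$ to $(p,q)$, where $p$ and $q$ are the numbers of occurrences of $x$ and of $y$ in $w$ (as $\N\times\N$ is commutative). The defining condition $r_{\N,\N}t_{\N,\N}=\varepsilon_{\N\times\N}$, evaluated at the generator $[(1,1)]$, forces $r_{\N,\N}(t_{\N,\N}([(1,1)]))=(1,1)$; that is, $w:=t_{\N,\N}([(1,1)])$ is a word containing exactly one $x$ and exactly one $y$. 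Hence $w=xy$ or $w=yx$.

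Next I would propagate this value to an arbitrary pair using naturality. For $a\in A$ and $b\in B$ let $h_a\colon \N\to A$ and $h_b\colon\N\to B$ be the unique homomorphisms with $h_a(1)=a$ and $h_b(1)=b$. Naturality of $t$ along $(h_a,h_b)\colon(\N,\N)\to(A,B)$ gives $(h_a+h_b)\,t_{\N,\N}=t_{A,B}\,P(h_a\times h_b)$; evaluating at $[(1,1)]$ and using $P(h_a\times h_b)([(1,1)])=[(a,b)]$ yields
\[
t_{A,B}([(a,b)])=(h_a+h_b)(w).
\]
Since $h_a+h_b\colon\N+\N\to A+B$ sends $x$ to $\underline a=\iota_A(a)$ and $y$ to $\overline b=\iota_B(b)$, we obtain $t_{A,B}([(a,b)])=\underline a\cdot\overline b$ when $w=xy$, and $t_{A,B}([(a,b)])=\overline b\cdot\underline a$ when $w=yx$. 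As this holds for every $(a,b)$ and every pair $(A,B)$, and the elements $[(a,b)]$ generate $P(A\times B)$, in the first case $t$ coincides with the direct natural imaginary splitting $t^d$ and in the second with the twisted one $t^w$ of Example~\ref{LeftRightImaginarySplitting}.

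There is no deep obstacle here; the points requiring care are the explicit identification of $\N+\N$ as the free monoid on two generators together with the description of $r_{\N,\N}$ on it, and the bookkeeping with the free monoid comonad (the generators of $P(A\times B)$ and the action of $P$ on morphisms) needed to justify that the value of $t$ on the single generator $[(1,1)]$ over $\N$ already determines $t$ everywhere.
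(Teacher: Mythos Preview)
Your proposal is correct and follows essentially the same route as the paper: determine $t_{\N,\N}([(1,1)])$ (the paper also uses $\N$), then propagate via naturality along the unique homomorphisms $\N\to A$, $\N\to B$ sending $1$ to $a$, $b$. Your version is in fact slightly more explicit than the paper's, since you spell out why $t_{\N,\N}([(1,1)])$ must be a word with exactly one $x$ and one $y$, whereas the paper simply asserts the dichotomy $\underline{1}\,\overline{1}$ or $\overline{1}\,\underline{1}$.
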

\begin{proof}
Consider the monoid of natural numbers $\N$. As a splitting of $\matriz{1_{\N}}{0}{0}{{1_\N}}$, we must have $t_{\N,\N}([(1,1)])=\underline{1}\overline{1}$ or $t_{\N,\N}([(1,1)])=\overline{1}\underline{1}$. 

Let $A$ and $B$ be arbitrary monoids and let us fix arbitrary elements $a\in A$ and $b\in B$. Consider the morphisms $f\colon \N \to A$, defined by $f(1)=a$, and $g\colon \N\to B$, defined by $g(1)=b$. The naturality of $t$ tells us that
\begin{center}
\begin{tabular}{ll}
    $t_{A,B}([(a,b)])=\underline{a}\overline{b}$, & when $t_{\N,\N}([(1,1)])=\underline{1}\overline{1}$; \\
    $t_{A,B}([(a,b))]=\overline{b}\underline{a}$, & when $t_{\N,\N}([(1,1)])=\overline{1}\underline{1}$.
\end{tabular}
\end{center}
Consequently, $t=t^d$ or $t=t^w$.
\end{proof}

A variation on this idea proves the same for unitary magmas. Then $\N$ must be replaced with the free unitary magma on a single generator $*$, which consists of non-associative words $()$, $*$, $(*,*)$, $(*,(*,*))$, $((*,*),*)$, etc. 

\begin{example}\label{GroupsDirectNorTwisted}
	In the category of groups, $P(A\times B)\to A+B\colon (a,b)\mapsto \underline{a}^{-1}\overline{b}\underline{a}^2$ determines a natural imaginary splitting which is neither direct nor twisted. This situation is studied in detail in the forthcoming article~\cite{SchreierObjects}.
\end{example}

\begin{remark}\label{remarks on t}
Any natural imaginary splitting $t\colon P((\cdot)\times (\cdot))\To (\cdot)+(\cdot)$ has the following properties:
\begin{itemize}
\item[1.] $t_{A,0}$ is isomorphic to $\varepsilon_A$
$$
\xymatrix@=2em{P(A) \ar@{>>}[r]^-{\varepsilon_A} \ar[d]|-{\cong} & A \ar@{=}[r]^-{1_A} \ar[d]|-{\cong} & A \ar[d]|-{\cong} \\
 P(A\times 0) \ar[r]^-{t_{A,0}} \ar@{>>}@/_1pc/[rr]_-{\varepsilon_{A\times 0}} & A+0 \ar@{>>}[r]^-{r_{A,0}} & A\times 0,}
$$
for all objects $A$ in $\C$;
\item[2.] the naturality of $t$ gives the commutative diagram
\begin{equation}\label{naturality of t}
\vcenter{\xymatrix{P(A\times B) \ar[r]^-{t_{A,B}} \ar[d]_-{P(u\times v)} & A+B \ar[d]^-{u+v} \\
 P(C\times D) \ar[r]_-{t_{C,D}} & C+D}}
\end{equation}
for all $u\colon A\to C$, $v\colon B\to D$ in $\C$;
\item[3.] from~\eqref{im splitting}, we deduce
\begin{equation}\label{pp1 for t}
 \bi{1_A}{0}t_{A,B}=\pi_A\varepsilon_{A\times B} \ontop{\eqref{naturality of epsilon}}{=}\varepsilon_A P(\pi_A)
\end{equation}
and
\begin{equation}\label{pp2 for t}
 \bi{0}{1_B}t_{A,B}=\pi_B\varepsilon_{A\times B} \ontop{\eqref{naturality of epsilon}}{=}\varepsilon_B P(\pi_B)
\end{equation}
for all objects $A$ and $B$ in $\C$;
\item[4.] using properties 1.\ and 2.\ above, we obtain the (regular epimorphism, monomorphism) factorisations
\begin{equation}\label{pp3 for t}
\vcenter{\xymatrix@C=4em@R=1em{P(A) \ar[r]^-{P(\langle 1_A,0\rangle)} \ar@{>>}[dr]_-{\varepsilon_A} & P(A\times B) \ar[r]^-{t_{A,B}} & A+B\\
 & A \ar@{ >->}[ru]_-{\iota_A}}}
\end{equation}
and
\begin{equation*}\label{pp4 for t}
\vcenter{\xymatrix@C=4em@R=1em{P(B) \ar[r]^-{P(\langle 0, 1_B\rangle)} \ar@{>>}[dr]_-{\varepsilon_B} & P(A\times B) \ar[r]^-{t_{A,B}} & A+B, \\
 & B \ar@{ >->}[ru]_-{\iota_B}}}
\end{equation*}
for all objects $A$ and $B$ in $\C$.
\end{itemize}
\end{remark}

\section{Intrinsic Schreier split extensions}\label{ISSE}
In this section we describe a categorical approach towards Schreier extensions. Here $\C$~will denote a regular unital category with binary coproducts, functorial projective covers and a natural imaginary splitting $t$.

\begin{definition} A split epimorphism $f$ with chosen section $s$ and kernel $K$
\begin{equation}\label{iSchreier}
\xymatrix@!0@=4em{ K \ar@{ |>->}[r]_-{k} & X \ar@<-.5ex>@{>>}[r]_-{f} & Y, \ar@<-.5ex>[l]_-{s}}
\end{equation}
is called an \defn{intrinsic Schreier split epimorphism} (with respect to $t$) if there exists an imaginary morphism $q\colon {X\ito K}$ (i.e. a morphism $q \colon P(X) \to K$), called the \defn{imaginary (Schreier) retraction}, such that
\begin{itemize}
\item[\iS1] $\bi{kq}{sf\varepsilon_X}t_{P(X),P(X)} P(\langle 1_{P(X)},1_{P(X)}\rangle)\delta_X=\varepsilon_X$, i.e., the diagram
$$
\xymatrix@C=5em{ P^2(X) \ar[r]^-{P(\langle 1_{P(X)},1_{P(X)}\rangle)} & P(P(X)\times P(X)) \ar[r]^-{t_{P(X),P(X)}} & P(X)+P(X) \ar[d]^-{\bi{kq}{sf\varepsilon_X}} \\
P(X) \ar@{ >->}[u]^-{\delta_X} \ar@{>>}[rr]_-{\varepsilon_X} & & X}
$$
commutes;
\item[\iS2] $qP(\bi{k}{s})P(t_{K,Y})\delta_{K\times Y}=\pi_K \varepsilon_{K\times Y}$, i.e., the diagram
$$
\xymatrix@C=4em{P^2(K\times Y) \ar[r]^-{P(t_{K,Y})} & P(K+Y) \ar[r]^-{P(\bi{k}{s})} & P(X) \ar[d]^-{q}\\
P(K\times Y) \ar@{ >->}[u]^-{\delta_{K\times Y}} \ar@{>>}[r]_-{\varepsilon_{K\times Y}} & K\times Y \ar@{>>}[r]_-{\pi_K} & K}
$$
 commutes.
\end{itemize}

\begin{proposition}\label{strong}
If the point $(f,s)$ in \eqref{iSchreier} admits $q\colon {X\ito K}$ satisfying \iS1, then it is a strong point.
\end{proposition}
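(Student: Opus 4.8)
The plan is to reduce the statement to Lemma~\ref{Imaginary Splitting}. Recall that $(f,s)$ being a strong point means precisely that the kernel $k$ and the section $s$ form a jointly extremal-epimorphic pair, which — since $\C$ has binary coproducts — amounts to saying that the induced morphism $\bi{k}{s}\colon K+Y\to X$ is an extremal epimorphism. As a regular epimorphism is always extremal, it suffices to show that $\bi{k}{s}$ is a regular epimorphism, and by Lemma~\ref{Imaginary Splitting} this will follow as soon as we exhibit an imaginary splitting $\overline{\sigma}\colon X\ito K+Y$ of $\bi{k}{s}$, i.e.\ a morphism $\sigma\colon P(X)\to K+Y$ with $\bi{k}{s}\sigma=\varepsilon_X$.

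The key observation is that such a $\sigma$ is already hidden inside condition \iS1. First I would note that the morphism $\bi{kq}{sf\varepsilon_X}\colon P(X)+P(X)\to X$ occurring there factors as $\bi{k}{s}\circ(q+f\varepsilon_X)$, where $q+f\varepsilon_X\colon P(X)+P(X)\to K+Y$ is the coproduct of $q\colon P(X)\to K$ with $f\varepsilon_X\colon P(X)\to Y$; this is a one-line check on the two coproduct injections, using $\bi{k}{s}\iota_K=k$ and $\bi{k}{s}\iota_Y=s$. Setting
\[
\sigma:=(q+f\varepsilon_X)\,t_{P(X),P(X)}\,P(\langle 1_{P(X)},1_{P(X)}\rangle)\,\delta_X\colon P(X)\to K+Y,
\]
condition \iS1 then reads exactly $\bi{k}{s}\sigma=\varepsilon_X$, so $\overline{\sigma}$ is the required imaginary splitting of $\bi{k}{s}$.

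From here I would simply invoke Lemma~\ref{Imaginary Splitting} to deduce that $\bi{k}{s}$ is a regular epimorphism, hence extremal, so that $(k,s)$ is a jointly extremal-epimorphic pair and $(f,s)$ is a strong point. I do not expect any genuine obstacle: the substantive content is carried entirely by Lemma~\ref{Imaginary Splitting}, and \iS1 is essentially designed to be a factored form of the equation $\bi{k}{s}\sigma=\varepsilon_X$. The only points requiring a little care are the bookkeeping of the decomposition $\bi{kq}{sf\varepsilon_X}=\bi{k}{s}(q+f\varepsilon_X)$ and the (standard) identification of ``$(k,s)$ jointly extremal-epimorphic'' with ``$\bi{k}{s}$ extremal-epimorphic''.
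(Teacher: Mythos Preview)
Your proposal is correct and follows essentially the same path as the paper's proof: both arguments use the factorisation $\bi{kq}{sf\varepsilon_X}=\bi{k}{s}(q+f\varepsilon_X)$ together with \iS1 to conclude that $\bi{k}{s}$ is a regular epimorphism, hence that $(k,s)$ is jointly extremal-epimorphic. The only cosmetic difference is that the paper phrases this via ``the last factor of a regular epimorphism is a regular epimorphism'' (applied twice), whereas you package the entire composite preceding $\bi{k}{s}$ into a single imaginary splitting $\sigma$ and invoke Lemma~\ref{Imaginary Splitting} directly; mathematically these are the same observation.
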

\begin{proof}
From \iS1 we see that $\bi{kq}{sf\varepsilon_X}\colon$ $P(X)+P(X)\twoheadrightarrow X$ is a regular epimorphism. It easily follows that also $\bi{k}{s}\colon K+Y\twoheadrightarrow X$ is a regular epimorphism, thus $(k,s)$ is a jointly extremal-epimorphic pair.	
\end{proof}

We then call the point and split extension in~\eqref{iSchreier} an \defn{intrinsic Schreier point} and an \defn{intrinsic Schreier split extension}, respectively. The properties \iS1 and \iS2 are the respective translations of \S1 and \S2 to the ``imaginary'' context.
\end{definition}

\begin{proposition}\label{uniqueness of q}
For an intrinsic Schreier split extension~\eqref{iSchreier}, the imaginary retraction $q\colon {X\ito K}$ is unique.
\end{proposition}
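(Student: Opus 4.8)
The plan is to transcribe into the imaginary setting the elementary two-line proof that a Schreier retraction of monoids is unique: if $q$ and $q'$ both satisfy \S1 and \S2, then for $x\in X$ one uses \S1 for $q$ to write $x=kq(x)\cdot sf(x)$ and then applies \S2 for $q'$ to this expression to conclude $q'(x)=q(x)$. Accordingly, I would take two imaginary retractions $q,q'\colon P(X)\to K$ for the point $(f,s)$ and prove $q=q'$, using only \iS1 for $q$ together with \iS2 for $q'$. The morphism that plays the role of the ``element $(q(x),f(x))$'' is $n:=\langle q,f\varepsilon_X\rangle\colon P(X)\to K\times Y$.

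The first key step is to deduce from \iS1 for $q$ the identity
\[
\bi{k}{s}\,t_{K,Y}\,P(n)\,\delta_X=\varepsilon_X,
\]
the imaginary counterpart of \S1. To obtain it, factor $n=(q\times f\varepsilon_X)\langle 1_{P(X)},1_{P(X)}\rangle$; the naturality square \eqref{naturality of t} gives $t_{K,Y}P(q\times f\varepsilon_X)=(q+f\varepsilon_X)\,t_{P(X),P(X)}$, and combining this with the fact that $\bi{k}{s}(q+f\varepsilon_X)=\bi{kq}{sf\varepsilon_X}$ turns the left-hand side above into the composite that \iS1 declares equal to $\varepsilon_X$. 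The second key step is to precompose \iS2 for $q'$, namely $q'P(\bi{k}{s})P(t_{K,Y})\delta_{K\times Y}=\pi_K\varepsilon_{K\times Y}$, with $P(n)\delta_X\colon P(X)\to P(K\times Y)$. On the right, naturality of $\varepsilon$ \eqref{naturality of epsilon} and the counit law \eqref{counit} collapse $\pi_K\varepsilon_{K\times Y}P(n)\delta_X$ to $\pi_K n=q$. On the left, one moves $\delta_{K\times Y}$ past $P(n)$ by \eqref{naturality of delta}, gathers $g:=\bi{k}{s}t_{K,Y}P(n)\colon P^2(X)\to X$ using functoriality of $P$, rewrites $\delta_{P(X)}\delta_X=P(\delta_X)\delta_X$ by coassociativity \eqref{coassociative}, and then invokes the identity above in the form $g\delta_X=\varepsilon_X$ together with \eqref{counit} once more; the left-hand side thus reduces to $q'P(\varepsilon_X)\delta_X=q'$. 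Comparing the two sides gives $q=q'$.

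Each individual move here is routine bookkeeping with the comonad data $(P,\delta,\varepsilon)$; the only genuinely creative point is to recognise that the object one should feed into \iS2 is the ``imaginary pair'' $P(\langle q,f\varepsilon_X\rangle)\delta_X$, and that the naturality of $t$ reorganises \iS1 into the clean identity above. This is the step I expect to be the main obstacle. It becomes fully transparent when one works inside the co-Kleisli category $\C_P$: there \iS1 for $q$ reads $\overline{\bi{k}{s}}\circ\overline{t_{K,Y}}\circ\overline{n}=\overline{1_X}$ and \iS2 for $q'$ reads $\overline{q'}\circ\overline{\bi{k}{s}}\circ\overline{t_{K,Y}}=\overline{\pi_K}$, so that, using associativity of co-Kleisli composition and the fact that $\overline{1_X}=\varepsilon_X$ is the identity of $\C_P$,
\[
q=\overline{\pi_K}\circ\overline{n}=\overline{q'}\circ\bigl(\overline{\bi{k}{s}}\circ\overline{t_{K,Y}}\circ\overline{n}\bigr)=\overline{q'}\circ\overline{1_X}=q'.
\]
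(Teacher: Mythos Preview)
Your argument is correct and follows essentially the same approach as the paper: both proofs first rewrite \iS1 into the identity $\bi{k}{s}\,t_{K,Y}\,P(\langle q,f\varepsilon_X\rangle)\,\delta_X=\varepsilon_X$ and then precompose \iS2 with $P(\langle q,f\varepsilon_X\rangle)\,\delta_X$, simplifying via the comonad laws. The only difference is a harmless swap of roles---the paper invokes \iS1 for \emph{both} $q$ and $q'$ and then \iS2 for $q$, whereas you use \iS1 for $q$ and \iS2 for $q'$; your version is marginally leaner and matches the monoid argument more closely, and your closing co-Kleisli reformulation is a nice conceptual gloss not present in the paper.
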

\begin{proof}
Suppose that there exist two imaginary retractions $q,q'\colon P(X)\to K$ such that \iS1 and \iS2 hold. From \iS1 applied to $q$ we get
\begin{align}
& \varepsilon_X = \bi{kq}{sf\varepsilon_X}t_{P(X),P(X)}P(\langle 1_{P(X)},1_{P(X)}\rangle)\delta_X \nonumber \\
 \Leftrightarrow {} & \varepsilon_X =\bi{k}{s}(q+f\varepsilon_X)t_{P(X),P(X)}P(\langle 1_{P(X)},1_{P(X)}\rangle)\delta_X \nonumber \\
 \ontop{\eqref{naturality of t}}{\Leftrightarrow} {} & \varepsilon_X = \bi{k}{s}t_{K,Y}P(q\times (f\varepsilon_X))P(\langle 1_{P(X)},1_{P(X)}\rangle)\delta_X \nonumber \\
 \Leftrightarrow {} & \varepsilon_X = \bi{k}{s}t_{K,Y}P(\langle q,f\varepsilon_X\rangle)\delta_X.\label{iS1'}
\end{align}
Applying \iS1 to $q'$ we obtain a similar equality, namely
$$
 \bi{k}{s}t_{K,Y}P(\langle q,f\varepsilon_X\rangle)\delta_X = \bi{k}{s}t_{K,Y}P(\langle q',f\varepsilon_X\rangle)\delta_X.
$$
We use this equality and \iS2 applied to $q$ to obtain $q=q'$. Indeed
\begin{align*}
 & q P(\bi{k}{s})P(t_{K,Y})P^2(\langle q,f\varepsilon_X\rangle)P(\delta_X)\delta_X \\
 & \quad = q P(\bi{k}{s})P(t_{K,Y})P^2(\langle q',f\varepsilon_X\rangle)P(\delta_X)\delta_X \\
 \ontop{\eqref{coassociative}}{\Leftrightarrow} \quad {} & q P(\bi{k}{s})P(t_{K,Y})P^2(\langle q,f\varepsilon_X\rangle)\delta_{P(X)}\delta_X \\
 & \quad = q P(\bi{k}{s})P(t_{K,Y})P^2(\langle q',f\varepsilon_X\rangle)\delta_{P(X)}\delta_X \\
 \ontop{\eqref{naturality of delta}}{\Leftrightarrow} \quad {} & q P(\bi{k}{s})P(t_{K,Y})\delta_{K\times Y}P(\langle q,f\varepsilon_X\rangle)\delta_X \\
 & \quad = q P(\bi{k}{s})P(t_{K,Y})\delta_{K\times Y}P(\langle q',f\varepsilon_X\rangle)\delta_X \\
 \ontop{\iS2}{\Leftrightarrow} \quad {} & \pi_K\varepsilon_{K\times Y}P(\langle q,f\varepsilon_X\rangle)\delta_X = \pi_K\varepsilon_{K\times Y}P(\langle q',f\varepsilon_X\rangle)\delta_X \\
 \ontop{\eqref{naturality of epsilon}}{\Leftrightarrow} \quad {} & \pi_K\langle q,f\varepsilon_X\rangle \varepsilon_{P(X)}\delta_X = \pi_K\langle q',f\varepsilon_X\rangle \varepsilon_{P(X)}\delta_X \\
 \ontop{\eqref{counit}}{\Leftrightarrow} \quad {} & q= q'.\qedhere
\end{align*}
\end{proof}

The next results give the intrinsic versions of those recalled in Proposition~\ref{basic pps}.

\begin{proposition}\label{iretraction} Let $\C$ be a regular unital category with binary coproducts, functorial projective covers and a natural imaginary splitting $t$. If~\eqref{iSchreier} is an intrinsic Schreier split extension with imaginary retraction $q$, then:
\begin{itemize}
\item[\iS3] $qP(k)=\varepsilon_K$;
\item[\iS4] $qP(s)=0$;
\item[\iS5] $q0_{P(X)}=0_K$;
\item[\iS6] $\begin{aligned}[t]
\bi{s}{k} t_{Y,K}= {} & \bi{kqP(\bi{s}{k}) P(t_{Y,K})}{s\pi_Y\varepsilon_{Y\times K}\varepsilon_{P(Y\times K)}} t_{P^2(Y\times K),P^2(Y\times K)} \\
& \quad \circ P( \langle 1_{P^2(Y\times K)}, 1_{P^2(Y\times K)}\rangle)\delta_{P(Y\times K)}\delta_{Y\times K}.	
\end{aligned}$
\end{itemize}
\end{proposition}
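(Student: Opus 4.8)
The plan is to transpose the elementary derivations of \S3--\S6 from \S1--\S2 into the imaginary setting, replacing ``evaluation at an element'' by ``precomposition with the $P$-image of an appropriate structural morphism'' and replacing the monoid computations by manipulations with the comonad identities~\eqref{naturality of epsilon}, \eqref{naturality of delta}, \eqref{counit} and~\eqref{coassociative}, the naturality~\eqref{naturality of t} of $t$, and the factorisations recorded in Remark~\ref{remarks on t}, in particular~\eqref{pp1 for t} and~\eqref{pp3 for t}.

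For \iS3 I would precompose the defining identity \iS2 with $P(\langle 1_K,0\rangle)\colon P(K)\to P(K\times Y)$. On the left, naturality of $\delta$ lets one slide $P(\langle 1_K,0\rangle)$ past $\delta_{K\times Y}$; then~\eqref{pp3 for t} gives $t_{K,Y}P(\langle 1_K,0\rangle)=\iota_K\varepsilon_K$; then~\eqref{counit} cancels $P(\varepsilon_K)\delta_K$; and finally $\bi{k}{s}\iota_K=k$ leaves $qP(k)$. On the right, naturality of $\varepsilon$ turns $\pi_K\varepsilon_{K\times Y}P(\langle 1_K,0\rangle)$ into $\pi_K\langle 1_K,0\rangle\varepsilon_K=\varepsilon_K$. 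The proof of \iS4 is identical with $P(\langle 0,1_Y\rangle)$ in place of $P(\langle 1_K,0\rangle)$, using $t_{K,Y}P(\langle 0,1_Y\rangle)=\iota_Y\varepsilon_Y$ (the companion factorisation in item~4 of Remark~\ref{remarks on t}) and $\pi_K\langle 0,1_Y\rangle=0$. Property \iS5 then drops out by precomposing \iS3 (or \iS4) with $P(0_K)$ (respectively $P(0_Y)$) and simplifying via naturality of $\varepsilon$, noting that $k0_K$ and $s0_Y$ are both the unique morphism $0\to X$.

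The only computation of substance is \iS6, which is the imaginary incarnation of the remark that \S6 is just \S1 evaluated at the element $s(y)\cdot k(a)$. Write $m=\bi{s}{k}t_{Y,K}\colon P(Y\times K)\to X$ for the imaginary morphism ``$(y,a)\mapsto s(y)\cdot k(a)$''; its appearance on the left of \iS6 is literally $m$, so it suffices to rewrite $m$ into the displayed right-hand side. I would start from the identity $m=\varepsilon_XP(m)\delta_{Y\times K}$ (a consequence of naturality of $\varepsilon$ and~\eqref{counit}), substitute \iS1 for $\varepsilon_X$, and push $P(m)$ inward: naturality of $\delta$ gives $\delta_XP(m)=P^2(m)\delta_{P(Y\times K)}$, after which $P(\langle 1_{P(X)},1_{P(X)}\rangle)P^2(m)=P(\langle P(m),P(m)\rangle)$ and two applications of~\eqref{naturality of t} pull the copies of $P(m)$ through $t$, yielding
\[
m=\bi{kqP(m)}{sf\varepsilon_XP(m)}\,t_{P^2(Y\times K),P^2(Y\times K)}\,P(\langle 1_{P^2(Y\times K)},1_{P^2(Y\times K)}\rangle)\,\delta_{P(Y\times K)}\delta_{Y\times K}.
\]
It remains to recognise the two legs. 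Since $m=\bi{s}{k}t_{Y,K}$ we have $kqP(m)=kqP(\bi{s}{k})P(t_{Y,K})$; and, using naturality of $\varepsilon$ once more together with
\[
fm=\bi{fs}{fk}t_{Y,K}=\bi{1_Y}{0}t_{Y,K}=\pi_Y\varepsilon_{Y\times K}
\]
(here $fk=0$ because $k=\ker f$, and the last step is~\eqref{pp1 for t}), the second leg becomes $sf\varepsilon_XP(m)=sfm\,\varepsilon_{P(Y\times K)}=s\pi_Y\varepsilon_{Y\times K}\varepsilon_{P(Y\times K)}$. This is exactly \iS6.

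I expect the main obstacle to be purely organisational: in \iS6 one handles up to three layers of $P$ and must invoke naturality of $\delta$, of $\varepsilon$ and of $t$, together with the counit law, in precisely the right positions; but no new idea is required beyond those already used in Propositions~\ref{strong} and~\ref{uniqueness of q}.
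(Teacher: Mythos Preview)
Your proposal is correct and follows essentially the same route as the paper: \iS3 and \iS4 by precomposing \iS2 with $P(\langle 1_K,0\rangle)$ and $P(\langle 0,1_Y\rangle)$ and simplifying via~\eqref{naturality of delta}, \eqref{pp3 for t}, \eqref{counit}; \iS5 is immediate; and \iS6 by substituting \iS1 into $m=\bi{s}{k}t_{Y,K}$, pushing $P(m)$ through $\delta$, the diagonal and $t$, then identifying the second leg via $fm=\bi{1_Y}{0}t_{Y,K}\ontop{\eqref{pp1 for t}}{=}\pi_Y\varepsilon_{Y\times K}$. The only cosmetic difference is that the paper first establishes the slightly stronger identity with $\bi{s}{k}t_{Y,K}\varepsilon_{P(Y\times K)}$ on the left and then precomposes with $\delta_{Y\times K}$, whereas you carry the $\delta_{Y\times K}$ from the outset; the computations are otherwise identical.
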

\begin{proof}
If we compose each side of \iS2 with $P(\langle 1_K,0\rangle)$, use~\eqref{naturality of delta} and~\eqref{naturality of epsilon}, we obtain
\begin{align*}
 & qP(\bi{k}{s}) P(t_{K,Y})P^2(\langle 1_K,0\rangle)\delta_{K}=\pi_K \langle 1_K,0\rangle \varepsilon_{K} \\
 \ontop{\eqref{pp3 for t}}{\Leftrightarrow} \quad {} & qP(\bi{k}{s}) P(\iota_K)P(\varepsilon_K)\delta_{K}=\varepsilon_{K} \\
 \ontop{\eqref{counit}}{\Leftrightarrow} \quad {} & qP(k)=\varepsilon_K;
\end{align*}
this proves \iS3. Similarly, we prove \iS4 by composing each side of \iS2 with $P(\langle 0,1_Y\rangle)$; \iS5 is obvious.

Next, we prove a stronger equality from which \iS6 easily follows (by precomposing with $\delta_{Y\times K}$ and using~\eqref{counit}):
\begin{align*}
& \bi{s}{k}t_{Y,K}\varepsilon_{P(Y\times K)}
\ontop{\eqref{naturality of epsilon}}{=} \varepsilon_X P(\bi{s}{k} t_{Y,K}) \\
 \ontop{\iS1}{=} \; {} & \bi{kq}{sf\varepsilon_X} t_{P(X),P(X)} P(\langle 1_{P(X)}, 1_{P(X)}\rangle) \delta_X P(\bi{s}{k} t_{Y,K}) \\
 \ontop{\eqref{naturality of delta}}{=} \; {} & \bi{kq}{sf\varepsilon_X} t_{P(X),P(X)} P(\langle 1_{P(X)}, 1_{P(X)}\rangle) P^2(\bi{s}{k}t_{Y,K}) \delta_{P(Y\times K)} \\
 = \; {} & \bi{kq}{sf\varepsilon_X} t_{P(X),P(X)} P(\langle 1_{P(X)}, 1_{P(X)}\rangle P(\bi{s}{k} t_{Y,K})) \delta_{P(Y\times K)} \\
 = \; {} & \bi{kq}{sf\varepsilon_X} t_{P(X),P(X)} P( P(\bi{s}{k} t_{Y,K})\times P(\bi{s}{k} t_{Y,K}))\\
 & \quad \circ P(\langle 1_{P^2(Y\times K)}, 1_{P^2(Y\times K)}\rangle)\delta_{P(Y\times K)} \\
 \ontop{\eqref{naturality of t}}{=} \; {} &\bi{kqP(\bi{s}{k} t_{Y,K})}{sf\varepsilon_X P(\bi{s}{k} t_{Y,K})} t_{P^2(Y\times K),P^2(Y\times K)}\\
 & \quad \circ P( \langle 1_{P^2(Y\times K)}, 1_{P^2(Y\times K)}\rangle ) \delta_{P(Y\times K)}.
\end{align*}

To finish, we use
\begin{align*}
sf\varepsilon_XP( \bi{s}{k} t_{Y,K} ) & \ontop{\eqref{naturality of epsilon}}{=} sf\bi{s}{k}t_{Y,K}\varepsilon_{P(Y\times K)} \\
&= s\bi{1_Y}{0}t_{Y,K} \varepsilon_{P(Y\times K)} \\
 & \ontop{\eqref{pp1 for t}}{=} s\pi_Y\varepsilon_{Y\times K}\varepsilon_{P(Y\times K)}.\qedhere
\end{align*}
\end{proof}

Any binary product gives an example of an intrinsic Schreier split extension, like for Schreier split extensions for monoids (see Proposition~\ref{Schreier products}).

\begin{proposition}\label{binary product}
Any split extension given by a binary product is a Schreier split extension.
\end{proposition}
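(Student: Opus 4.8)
The plan is to mimic the monoid situation of Proposition~\ref{Schreier products}: there the Schreier retraction of $\pi_Y\colon X\times Y\to Y$ (with kernel $\langle 1_X,0\rangle$ and section $\langle 0,1_Y\rangle$) is simply the other projection $\pi_X$, so here the natural candidate for the imaginary retraction is $q:=\overline{\pi_X}=\pi_X\varepsilon_{X\times Y}\colon P(X\times Y)\to X$. Writing $Z:=X\times Y$, $k:=\langle 1_X,0\rangle$, $f:=\pi_Y$ and $s:=\langle 0,1_Y\rangle$ (so that $k$ is the kernel of $f$), I would verify that this $q$ satisfies \iS1 and \iS2. By Proposition~\ref{strong}, the validity of \iS1 also guarantees that $(f,s)$ is a strong point, hence that we are genuinely dealing with a split extension, so no separate argument for that is needed.

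For \iS2 the key remark is that $\bi{k}{s}\colon X+Y\to X\times Y$ is nothing but the comparison morphism $r_{X,Y}$, since $\bi{k}{s}\iota_X=\langle 1_X,0\rangle=r_{X,Y}\iota_X$ and $\bi{k}{s}\iota_Y=\langle 0,1_Y\rangle=r_{X,Y}\iota_Y$. Hence $qP(\bi{k}{s})P(t_{X,Y})\delta_{Z}=\pi_X\varepsilon_{Z}P(r_{X,Y})P(t_{X,Y})\delta_{Z}$, and pulling $\varepsilon$ past $P(r_{X,Y})$ and then past $P(t_{X,Y})$ by two applications of~\eqref{naturality of epsilon}, followed by the counit identity~\eqref{counit} in the form $\varepsilon_{P(Z)}\delta_{Z}=1_{P(Z)}$ and the defining identity~\eqref{im splitting} of $t$ in the form $r_{X,Y}t_{X,Y}=\varepsilon_{Z}$, collapses the left-hand side to $\pi_X\varepsilon_{Z}=\pi_K\varepsilon_{K\times Y}$. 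This is exactly the right-hand side of \iS2.

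For \iS1 I would first compute $\bi{kq}{sf\varepsilon_{Z}}\colon P(Z)+P(Z)\to Z$ by composing it with the two projections of $Z=X\times Y$: using $\pi_X k=1_X$, $\pi_X s=0$, $\pi_Y k=0$, $\pi_Y s=1_Y$ and $f=\pi_Y$, one finds $\pi_X\bi{kq}{sf\varepsilon_{Z}}=\bi{\pi_X\varepsilon_{Z}}{0}$ and $\pi_Y\bi{kq}{sf\varepsilon_{Z}}=\bi{0}{\pi_Y\varepsilon_{Z}}$, so that $\bi{kq}{sf\varepsilon_{Z}}=\langle\,\pi_X\varepsilon_{Z}\bi{1_{P(Z)}}{0}\,,\ \pi_Y\varepsilon_{Z}\bi{0}{1_{P(Z)}}\,\rangle$. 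Composing on the right with $t_{P(Z),P(Z)}$ and invoking~\eqref{pp1 for t} and~\eqref{pp2 for t} (with $A=B=P(Z)$) replaces $\bi{1_{P(Z)}}{0}t_{P(Z),P(Z)}$ and $\bi{0}{1_{P(Z)}}t_{P(Z),P(Z)}$ by the first and second projections of $P(Z)\times P(Z)$ precomposed with $\varepsilon_{P(Z)\times P(Z)}$; composing further with $P(\langle 1_{P(Z)},1_{P(Z)}\rangle)$ and using~\eqref{naturality of epsilon} collapses both entries of the pairing to $\varepsilon_{P(Z)}$, so that the composite so far equals $\langle\pi_X,\pi_Y\rangle\varepsilon_{Z}\varepsilon_{P(Z)}=\varepsilon_{Z}\varepsilon_{P(Z)}$; a final composition with $\delta_{Z}$ and one more use of~\eqref{counit} yields $\varepsilon_{Z}$, which is precisely \iS1.

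All of this is routine diagram chasing; the only delicate point is the bookkeeping in \iS1 — decomposing $\bi{kq}{sf\varepsilon_{Z}}$ as a pairing of morphisms into $X$ and into $Y$ and then applying the naturality identities for $\varepsilon$ at the correct objects ($P(Z)$, $P(Z)\times P(Z)$, $P^2(Z)$), without confusing the various copies of $P$ applied to $X\times Y$. Once \iS1 and \iS2 have been checked, the split extension given by the binary product, equipped with $q=\overline{\pi_X}$, is by definition an intrinsic Schreier split extension, which finishes the argument.
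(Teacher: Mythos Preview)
Your proposal is correct and takes essentially the same route as the paper: the same imaginary retraction $q=\pi_X\varepsilon_{X\times Y}$, the same identification $\bi{k}{s}=r_{X,Y}$ together with $r_{X,Y}t_{X,Y}=\varepsilon_{X\times Y}$ for \iS2, and the same counit/naturality chase for \iS1. The only cosmetic difference is in the \iS1 computation: the paper factors $\bi{kq}{sf\varepsilon_{Z}}$ as $(\pi_X\times\pi_Y)\,r_{Z,Z}\,(\varepsilon_Z+\varepsilon_Z)$ and then invokes the naturality of $t$ together with $r_{Z,Z}t_{Z,Z}=\varepsilon_{Z\times Z}$, whereas you project onto the two factors of $Z=X\times Y$ and use the derived identities~\eqref{pp1 for t} and~\eqref{pp2 for t} directly---the same content, organised componentwise.
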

\begin{proof}
Given any split extension underlying a binary product in $\C$
$$
\xymatrix@!0@=6em{X \ar@{ |>->}[r]_-{\langle 1_X,0\rangle} & X\times Y \ar@<-.5ex>@{>>}[r]_-{\pi_Y} & Y \ar@<-.5ex>[l]_-{\langle 0, 1_Y\rangle}}
$$
we define $q=\pi_X\varepsilon_{X\times Y}\colon P(X\times Y)\to X$ for its imaginary retraction. To prove \iS1, we use
\begin{align*}
& \bi{\langle 1_X,0\rangle \pi_X\varepsilon_{X\times Y}}{\langle 0, 1_Y\rangle \pi_Y\varepsilon_{X\times Y}} t_{P(X\times Y),P(X\times Y)}\\
& \quad \circ P(\langle 1_{P(X\times Y)}, 1_{P(X\times Y)}\rangle)\delta_{X\times Y} \\
= {} & \bi{\langle \pi_X,0\rangle}{\langle 0, \pi_Y\rangle} (\varepsilon_{X\times Y}+\varepsilon_{X\times Y}) t_{P(X\times Y),P(X\times Y)}\\
& \quad \circ P(\langle 1_{P(X\times Y)}, 1_{P(X\times Y)}\rangle)\delta_{X\times Y} \\
\ontop{\eqref{naturality of t}}{=}{} & (\pi_X\times \pi_Y) \matriz{1_{X\times Y}}{0}{0}{1_{X\times Y}} t_{X\times Y, X\times Y} P(\varepsilon_{X\times Y}\times \varepsilon_{X\times Y})\\
& \quad \circ P(\langle 1_{P(X\times Y)}, 1_{P(X\times Y)}\rangle)\delta_{X\times Y} \\
 \ontop{\eqref{im splitting}}{=} {} & (\pi_X\times \pi_Y) \varepsilon_{X\times Y\times X\times Y} P(\langle \varepsilon_{X\times Y}, \varepsilon_{X\times Y}\rangle)\delta_{X\times Y} \\
 \ontop{\eqref{naturality of epsilon}}{=}{} & \varepsilon_{X\times Y}P(\pi_X\times \pi_Y) P(\langle \varepsilon_{X\times Y}, \varepsilon_{X\times Y}\rangle)\delta_{X\times Y} \\
 = {} & \varepsilon_{X\times Y}P(\varepsilon_{X\times Y})\delta_{X\times Y} \\
 \ontop{\eqref{counit}}{=} {} & \varepsilon_{X\times Y}.
\end{align*}
The proof of \iS2 is quite straightforward:
\begin{align*}
\pi_X\varepsilon_{X\times Y}P\bigl( \matriz{1_X}{0}{0}{1_Y} \bigr) P(t_{X,Y})\delta_{X\times Y} & \ontop{\eqref{im splitting}}{=} \pi_X\varepsilon_{X\times Y}P(\varepsilon_{X\times Y})\delta_{X\times Y}
 \ontop{\eqref{counit}}{=} \pi_X\varepsilon_{X\times Y}.\qedhere
\end{align*}
\end{proof}

\begin{corollary}\label{exs of Schreier}For any object $X$,
\[
\xymatrix@!0@=4em{X \ar@{ |>->}[r]_-{1_X} & X \ar@<-.5ex>@{>>}[r]_-{!_X} & 0 \ar@<-.5ex>[l]_-{0_X}}
\qquad\text{and}\qquad
\xymatrix@!0@=4em{0 \ar@{ |>->}[r]_-{0_X} & X \ar@<-.5ex>@{>>}[r]_-{1_X} & X \ar@<-.5ex>[l]_-{1_X}}
\]
are intrinsic Schreier split extensions.
\end{corollary}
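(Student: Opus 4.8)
The plan is to deduce this from Proposition~\ref{binary product}, exactly as Corollary~\ref{Schreier terminal and id} is obtained from Proposition~\ref{Schreier products} in the case of monoids. Indeed, the terminal split extension of $X$ is, up to isomorphism, the split extension underlying the binary product $X\times 0$: the canonical isomorphism $X\times 0\cong X$ identifies $\langle 1_X,0\rangle$ with $1_X$, the projection $\pi_0$ with $!_X$, and $\langle 0,1_0\rangle$ with $0_X$. Dually, the identity split extension of $X$ is, up to isomorphism, the split extension underlying $0\times X$, the isomorphism $0\times X\cong X$ identifying $\langle 1_0,0\rangle$ with $0_X$, the projection $\pi_X$ with $1_X$, and $\langle 0,1_X\rangle$ with $1_X$. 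So in both cases Proposition~\ref{binary product} applies, once we know that being an intrinsic Schreier split extension is invariant under isomorphism of split extensions.

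This invariance is straightforward: if $(g,h)$ is an isomorphism of split extensions, with $g\colon X\to X'$, $h\colon Y\to Y'$ and induced isomorphism $\widetilde{g}\colon K\to K'$ on kernels, and if $q$ is an imaginary retraction for the source, then $\widetilde{g}\,q\,P(g^{-1})\colon P(X')\to K'$ is an imaginary retraction for the target; the diagrams \iS1 and \iS2 for the transported retraction are obtained from those for $q$ by pasting with the naturality squares of $\varepsilon$, $\delta$ and $t$ (equations~\eqref{naturality of epsilon}, \eqref{naturality of delta} and~\eqref{naturality of t}) together with the evident compatibilities $k'\widetilde{g}=gk$, $s'h=gs$ and $f'g=hf$.

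Alternatively, one can bypass the invariance step and verify the two cases by hand, which is very short. For the terminal split extension take $q=\varepsilon_X\colon P(X)\to X$; then $\bi{kq}{sf\varepsilon_X}=\bi{\varepsilon_X}{0}=\varepsilon_X\bi{1_{P(X)}}{0}$, so \iS1 follows from~\eqref{pp1 for t} (with $A=B=P(X)$) and~\eqref{counit}, while \iS2 follows from Remark~\ref{remarks on t}.1, the naturality of $\varepsilon$ and $\delta$, and~\eqref{counit}. For the identity split extension the kernel is $0$, hence there is a unique candidate $q\colon P(X)\to 0$; condition \iS2 holds trivially since $0$ is a terminal object, and \iS1 follows from~\eqref{pp2 for t} (again with $A=B=P(X)$) and~\eqref{counit}, using that here $\bi{kq}{sf\varepsilon_X}=\bi{0}{\varepsilon_X}=\varepsilon_X\bi{0}{1_{P(X)}}$. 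I do not expect any genuine obstacle; the only thing to watch is the bookkeeping of the canonical isomorphisms $X\times 0\cong X\cong 0\times X$ under the functor $P$ and the natural transformation $t$, which is routine.
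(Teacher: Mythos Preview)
Your proposal is correct and matches the paper's approach: the corollary is stated immediately after Proposition~\ref{binary product} with no proof, the intended argument being exactly the one you give---specialise the binary product to $X\times 0$ and $0\times X$. Your additional remarks on invariance under isomorphism and the alternative direct verification are sound and go slightly beyond what the paper spells out.
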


\begin{proposition} \label{compatibility of q} Any morphism in $\Pt(\C)$ between intrinsic Schreier split extensions
$$
\xymatrix@!0@=4em{ K \ar@<-.5ex>@{ |>->}[r]_-k \ar[d]_-{\widetilde{g}} & X \ar@<-.5ex>@{-->}[l]_-q \ar@<-.5ex>@{>>}[r]_-f \ar[d]_-g & Y \ar@<-.5ex>[l]_-s \ar[d]^-h \\
 K' \ar@<-.5ex>@{ |>->}[r]_-{k'} & X' \ar@<-.5ex>@{-->}[l]_-{q'} \ar@<-.5ex>@{>>}[r]_-{f'} & Y' \ar@<-.5ex>[l]_-{s'}}
$$
is compatible with respect to their imaginary retractions, i.e., $\widetilde{g} q = q' P(g)$.
\end{proposition}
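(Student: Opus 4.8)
The plan is to follow the pattern of the proof of Proposition~\ref{uniqueness of q}. Note first that $\widetilde{g}q$ and $q'P(g)$ are both morphisms $P(X)\to K'$, but neither is the imaginary retraction of a single common extension, so Proposition~\ref{uniqueness of q} cannot be invoked directly; what we reuse is the computation behind it. The first step is to transport \iS1 for $q$ along $g$: composing \iS1 for $q$ on the left with $g$, using $gk=k'\widetilde{g}$, $gs=s'h$ and $f'g=hf$ (which express that we are given a morphism of points) together with the naturality~\eqref{naturality of epsilon} of $\varepsilon$ — so that $g\varepsilon_X=\varepsilon_{X'}P(g)$ and $gsf\varepsilon_X=s'f'\varepsilon_{X'}P(g)$ — yields
\[
\bi{k'\widetilde{g}q}{s'f'\varepsilon_{X'}P(g)}\,t_{P(X),P(X)}\,P(\langle 1_{P(X)},1_{P(X)}\rangle)\delta_X=\varepsilon_{X'}P(g).
\]
Rewriting the left-hand side exactly as in the passage from \iS1 to~\eqref{iS1'} — distribute $\bi{k'\widetilde{g}q}{s'f'\varepsilon_{X'}P(g)}=\bi{k'}{s'}(\widetilde{g}q+f'\varepsilon_{X'}P(g))$, apply the naturality~\eqref{naturality of t} of $t$, and use functoriality of $P$ to collapse two composable $P$-images into one — this becomes
\[
\bi{k'}{s'}\,t_{K',Y'}\,P(\langle \widetilde{g}q,\,f'\varepsilon_{X'}P(g)\rangle)\,\delta_X=\varepsilon_{X'}P(g)\qquad(\ast).
\]

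Next I would observe that \iS1 for $q'$, written in the form~\eqref{iS1'} as $\bi{k'}{s'}t_{K',Y'}P(\langle q',f'\varepsilon_{X'}\rangle)\delta_{X'}=\varepsilon_{X'}$ and then precomposed with $P(g)$, produces the companion identity: using $\delta_{X'}P(g)=P^2(g)\delta_X$ from~\eqref{naturality of delta} and $P(\langle q',f'\varepsilon_{X'}\rangle)P^2(g)=P(\langle q'P(g),f'\varepsilon_{X'}P(g)\rangle)$, one gets
\[
\bi{k'}{s'}\,t_{K',Y'}\,P(\langle q'P(g),\,f'\varepsilon_{X'}P(g)\rangle)\,\delta_X=\varepsilon_{X'}P(g)\qquad(\ast\ast).
\]
Hence the left-hand sides of $(\ast)$ and $(\ast\ast)$ coincide.

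From this equality I would extract $\widetilde{g}q=q'P(g)$ by repeating the final computation of the proof of Proposition~\ref{uniqueness of q}: apply the functor $P$ to both sides, precompose with $q'$ and postcompose with $\delta_X$; use~\eqref{coassociative} to replace $P(\delta_X)\delta_X$ by $\delta_{P(X)}\delta_X$; use~\eqref{naturality of delta} to move $\delta_{P(X)}$ across the relevant $P^2(-)$, producing $\delta_{K'\times Y'}$; invoke \iS2 for $q'$ to replace $q'P(\bi{k'}{s'})P(t_{K',Y'})\delta_{K'\times Y'}$ by $\pi_{K'}\varepsilon_{K'\times Y'}$; and finish with~\eqref{naturality of epsilon} and the counit identity~\eqref{counit}. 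On the $(\ast)$-side this yields $\widetilde{g}q$ (read off as the first component $\pi_{K'}\langle\widetilde{g}q,f'\varepsilon_{X'}P(g)\rangle$ after the last reductions), and on the $(\ast\ast)$-side it yields $q'P(g)$, so $\widetilde{g}q=q'P(g)$, as claimed. I expect no genuine obstacle: this is a structured manipulation of the comonad coherences already exercised in Propositions~\ref{uniqueness of q} and~\ref{iretraction}. The only points deserving some care are checking that the morphism-of-points relations $gk=k'\widetilde{g}$, $gs=s'h$, $f'g=hf$ are used consistently in the first step, and recognising $(\ast\ast)$ as simply \iS1 for $q'$ transported along $P(g)$.
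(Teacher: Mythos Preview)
Your proof is correct and follows essentially the same route as the paper's: both derive the equality of $(\ast)$ and $(\ast\ast)$ from \iS1 transported along $g$ and $P(g)$ respectively (the paper writes $f'g\varepsilon_X$ where you write the equal $f'\varepsilon_{X'}P(g)$), and then both defer to the extraction argument from the proof of Proposition~\ref{uniqueness of q} using \iS2 for $q'$. One small terminological slip: after applying $P$ you should \emph{post}compose with $q'$ (on the left) and \emph{pre}compose with $\delta_X$ (on the right), not the other way round, but your intent is clear from the reference to Proposition~\ref{uniqueness of q}.
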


\begin{proof} We start by using~\eqref{iS1'} for the bottom intrinsic Schreier extension
\begin{align*}
& \bi{k'}{s'} t_{K',Y'} P(\langle q',f'\varepsilon_{X'}\rangle) \delta_{X'}P(g) = \varepsilon_{X'}P(g) \\
 \ontop{\eqref{naturality of delta},\eqref{naturality of epsilon}}{\Leftrightarrow} \quad {} & \bi{k'}{s'} t_{K',Y'} P(\langle q',f'\varepsilon_{X'}\rangle)P^2(g) \delta_{X} = g\varepsilon_{X} \\
 \ontop{\eqref{iS1'}}{\Leftrightarrow} \quad {} & \bi{k'}{s'} t_{K',Y'} P(\langle q'P(g),f'\varepsilon_{X'}P(g)\rangle) \delta_{X} = g\bi{k}{s} t_{K,Y} P(\langle q,f\varepsilon_{X}\rangle) \delta_{X} \\
 \ontop{\eqref{naturality of epsilon}}{\Leftrightarrow} \quad {} & \bi{k'}{s'} t_{K',Y'} P(\langle q'P(g),f'g\varepsilon_{X}\rangle) \delta_{X} = \bi{gk}{gs} t_{K,Y} P(\langle q,f\varepsilon_{X}\rangle) \delta_{X} \\
 \Leftrightarrow \quad {} & \bi{k'}{s'} t_{K',Y'} P(\langle q'P(g),f'g\varepsilon_{X}\rangle) \delta_{X} = \bi{k'}{s'} t_{K',Y'} P(\langle \widetilde{g}q,f'g\varepsilon_{X}\rangle) \delta_{X},	
\end{align*}
by using the commutativity of the diagram and~\eqref{naturality of t}.

We may now proceed as in the second part of the proof of Proposition~\ref{uniqueness of q} to conclude that $q'P(g)=\widetilde{g}q$.
\end{proof}

\begin{proposition} \label{sstrong} If the point $(f,s)$ in \eqref{iSchreier} admits $q\colon {X\ito K}$  satisfying \iS1, then it is a stably strong point.
\end{proposition}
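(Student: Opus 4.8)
The plan is to reduce the statement to Proposition~\ref{strong}. By definition, a point is stably strong when each of its pullbacks, equipped with the induced section, is a strong point; so it suffices to show that the pullback of $(f,s)$ along an arbitrary morphism $h\colon Y'\to Y$ again admits an imaginary morphism satisfying \iS1. I would write this pullback as a point $(f',s')$ over $Y'$, with pullback projection $g\colon X'\to X$, induced section $s'$ and induced kernel $k'\colon K\to X'$, so that $fg=hf'$, $gs'=sh$, $gk'=k$, $f'k'=0$, $f's'=1_{Y'}$, and the morphism of points $(g,h)$ induces the identity on the common kernel $K$. Guided by Proposition~\ref{compatibility of q} (which forces $\widetilde g\,q=q'P(g)$ for any morphism of intrinsic Schreier points), the natural candidate for the imaginary retraction of $(f',s')$ is $q'=qP(g)\colon P(X')\to K$.

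It then remains to verify \iS1 for $(f',s')$ with this $q'$. The key observation is that the comparison morphism $\langle f',g\rangle\colon X'\to Y'\times X$ is a monomorphism, so the \iS1 identity --- an equality of two morphisms $P(X')\to X'$ --- may be checked after composing on the left with $f'$ and with $g$ separately. Composing with $g$ and using $gk'=k$, $gs'=sh$, $hf'=fg$ and the naturality of $\varepsilon$~\eqref{naturality of epsilon} rewrites $g\,\bi{k'q'}{s'f'\varepsilon_{X'}}$ as $\bi{kq}{sf\varepsilon_X}(P(g)+P(g))$; the naturality of $t$~\eqref{naturality of t} and then of $\delta$~\eqref{naturality of delta} move $P(g)$ to the outer right-hand end, and \iS1 for $(f,s)$ collapses what remains to $\varepsilon_XP(g)=g\varepsilon_{X'}$. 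Composing with $f'$, the relations $f'k'=0$ and $f's'=1_{Y'}$ reduce $f'\,\bi{k'q'}{s'f'\varepsilon_{X'}}$ to $\bi{0}{f'\varepsilon_{X'}}=f'\varepsilon_{X'}\bi{0}{1_{P(X')}}$; then property~\eqref{pp2 for t} of the natural imaginary splitting, together with the fact that the second projection $P(X')\times P(X')\to P(X')$ composed with $\langle 1_{P(X')},1_{P(X')}\rangle$ is $1_{P(X')}$ and the counit identity~\eqref{counit}, leaves exactly $f'\varepsilon_{X'}$. Putting the two components together gives \iS1 for $(f',s')$, whence Proposition~\ref{strong} makes it a strong point; as $h$ was arbitrary, $(f,s)$ is stably strong.

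The argument is a sequence of routine diagram chases with the comonad identities~\eqref{naturality of epsilon}--\eqref{coassociative} and the properties of $t$ recorded in Remark~\ref{remarks on t}, so I do not anticipate a genuine obstacle. The two points that really need care are recognising that $q'=qP(g)$ is the correct candidate, and that \iS1 for a pullback can be tested componentwise along the jointly monomorphic pair $(f',g)$; once this is set up, the $g$-component is nothing but \iS1 for the original point transported along $P(g)$, while the $f'$-component is in essence just the fact that $t$ splits the comparison $r$ from the sum to the product.
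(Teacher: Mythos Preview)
Your proposal is correct and follows essentially the same route as the paper's proof: pull back along an arbitrary morphism, take the imaginary retraction of the new point to be $q$ precomposed with $P$ of the pullback projection to $X$, and verify \iS1 componentwise along the jointly monic pair of pullback projections, one component collapsing via \iS1 for the original point and the other via~\eqref{pp2 for t} and the counit law. Apart from notation (the paper writes the pullback concretely as $Z\times_Y X$ with projections $\pi_Z$, $\pi_X$ where you write $X'$ with $f'$, $g$), the two arguments are the same.
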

\begin{proof}
We already know that $(f,s)$ is strong by Proposition~\ref{strong}. Now to see that $(f,s)$ is a stably strong point, we take its pullback along an arbitrary morphism~$g$
\begin{equation}\label{arbitraty pb}
\vcenter{\xymatrix@!0@R=4em@C=5em{ K \ar@{=}[r] \ar@{ |>->}[d]_-{\langle 0, k\rangle} & K \ar@{ |>->}[d]^-{k} \\
 Z\times_Y X \ar[r]^-{\pi_X} \ar@<-.5ex>@{>>}[d]_-{\pi_Z} \ophalfsplitpullback & X \ar@<-.5ex>@{>>}[d]_-{f} \\
 Z \ar[r]_-g \ar@<-.5ex>[u]_(.4){\langle 1_Z, sg\rangle} & Y. \ar@<-.5ex>[u]_-{s}}}
\end{equation}
To prove that $(\pi_Z, \langle 1_Z,sg \rangle)$ is strong it suffices to show the existence of an imaginary morphism $q'\colon {Z\times_Y X\ito K}$ satisfying \iS1. We define
\[
q'=qP(\pi_X)\colon {P(Z\times_Y X)\to K}
\]
and check that
\begin{multline*}
\varepsilon_{Z\times_Y X} = \bi{\langle 0, k\rangle q P(\pi_X)}{\langle 1_Z, sg\rangle \pi_Z\varepsilon_{Z\times_Y X}} \\
\circ t_{P(Z\times_Y X),P(Z\times_Y X)} P(\langle 1_{P(Z\times_Y X)}, 1_{P(Z\times_Y X)}\rangle)
 \delta_{Z\times_Y X}.		
\end{multline*}
Indeed, this equality follows from
\begin{align*}
&\pi_Z \bi{\langle 0, k\rangle q P(\pi_X)}{\langle 1_Z, sg\rangle \pi_Z\varepsilon_{Z\times_Y X}}\\
&\quad \circ t_{P(Z\times_Y X),P(Z\times_Y X)}
 P(\langle 1_{P(Z\times_Y X)}, 1_{P(Z\times_Y X)}\rangle)\delta_{Z\times_Y X} \\
 = {} & \bi{0}{\pi_Z\varepsilon_{Z\times_Y X}} t_{P(Z\times_Y X),P(Z\times_Y X)} P(\langle 1_{P(Z\times_Y X)}, 1_{P(Z\times_Y X)}\rangle)\delta_{Z\times_Y X} \\
 = {} & \pi_Z\varepsilon_{Z\times_Y X}\bi{0}{1_{P(Z\times_Y X)}}t_{P(Z\times_Y X),P(Z\times_Y X)} P(\langle 1_{P(Z\times_Y X)}, 1_{P(Z\times_Y X)}\rangle) \delta_{Z\times_Y X} \\
 \ontop{\eqref{pp2 for t}}{=}{} & \pi_Z\varepsilon_{Z\times_Y X}\varepsilon_{P(Z\times_Y X)}P(\pi_2)P(\langle 1_{P(Z\times_Y X)}, 1_{P(Z\times_Y X)}\rangle) \delta_{Z\times_Y X} \\
 = {} & \pi_Z\varepsilon_{Z\times_Y X}\varepsilon_{P(Z\times_Y X)}\delta_{Z\times_Y X}
 \ontop{\eqref{counit}}{=} \pi_Z \varepsilon_{Z\times_Y X}	
\end{align*}
and
\begin{align*}
& \pi_X \bi{\langle 0, k\rangle q P(\pi_X)}{\langle 1_Z, sg\rangle \pi_Z\varepsilon_{Z\times_Y X}}\\
& \quad \circ t_{P(Z\times_Y X),P(Z\times_Y X)}
 P(\langle 1_{P(Z\times_Y X)}, 1_{P(Z\times_Y X)}\rangle)\delta_{Z\times_Y X} \\
 = {} & \bi{kqP(\pi_X)}{sg\pi_Z\varepsilon_{Z\times_Y X}} \\
& \quad \circ t_{P(Z\times_Y X),P(Z\times_Y X)} P(\langle 1_{P(Z\times_Y X)}, 1_{P(Z\times_Y X)}\rangle)\delta_{Z\times_Y X} \\
 = {} &\bi{kqP(\pi_X)}{sf\pi_X\varepsilon_{Z\times_Y X}} \\
& \quad \circ t_{P(Z\times_Y X),P(Z\times_Y X)} P(\langle 1_{P(Z\times_Y X)}, 1_{P(Z\times_Y X)}\rangle)\delta_{Z\times_Y X} \\
 \ontop{\eqref{naturality of epsilon}}{=} {} &\bi{kq}{sf\varepsilon_X}(P(\pi_X)+P(\pi_X))\\
& \quad \circ t_{P(Z\times_Y X),P(Z\times_Y X)} P(\langle 1_{P(Z\times_Y X)}, 1_{P(Z\times_Y X)}\rangle) \delta_{Z\times_Y X} \\
 \ontop{\eqref{naturality of t}}{=} {} & \bi{kq}{sf\varepsilon_X}t_{P(X),P(X)} P(P(\pi_X)\times P(\pi_X))P(\langle 1_{P(Z\times_Y X)}, 1_{P(Z\times_Y X)}\rangle) \delta_{Z\times_Y X} \\
 = {} &\bi{kq}{sf\varepsilon_X} t_{P(X),P(X)} P(\langle P(\pi_X),P(\pi_X)\rangle) \delta_{Z\times_Y X} \\
 = {} &\bi{kq}{sf\varepsilon_X}t_{P(X),P(X)} P(\langle 1_{P(X)},1_{P(X)} \rangle) P^2(\pi_X)\delta_{Z\times_Y X} \\
 \ontop{\eqref{naturality of delta}}{=} {} &\bi{kq}{sf\varepsilon_X}t_{P(X),P(X)}P(\langle 1_{P(X)},1_{P(X)} \rangle)\delta_X P(\pi_X) \\
 \ontop{\iS1}{=} {} & \varepsilon_XP(\pi_X)
 \ontop{\eqref{naturality of epsilon}}{=} \pi_X \varepsilon_{Z\times_Y X}.\qedhere
\end{align*}
\end{proof}

We observe that the proof of the previous proposition actually tells us that the points $(f, s)$ satisfying \iS1 for a certain imaginary retraction $q$ are stable under pullbacks along any morphism.

Recall from~\cite{2Chs} that an object $Y$ is said to be a \defn{protomodular object} when all points over it are stably strong. Consequently, a finitely complete category is protomodular if and only if all of its objects are protomodular. It was shown there that the protomodular objects in $\Mon$ are precisely the groups. The next result gives a partial version of Corollary 3.1.7 from~\cite{SchreierBook} which says that a monoid $Y$ is a group if and only if all points over $Y$ are Schreier points.

\begin{corollary}\label{s-special => proto} If all points over $Y$ are intrinsic Schreier points, then $Y$ is a protomodular object.
\end{corollary}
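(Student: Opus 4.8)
The plan is to read off the result directly from Proposition~\ref{sstrong} together with the definition of a protomodular object. Recall that $Y$ is a protomodular object precisely when every point with codomain $Y$ is stably strong; since the notion of a stably strong point already quantifies over all pullbacks along arbitrary morphisms, there is nothing further to verify about the (varying) codomains of those pullbacks. So the task reduces to showing that each point over $Y$ is stably strong.

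First I would take an arbitrary point $(f,s)$ with codomain $Y$. By hypothesis it is an intrinsic Schreier point, so by definition it admits an imaginary retraction $q\colon X\ito K$ satisfying \iS1 (and \iS2, though only \iS1 is needed here). Then I would invoke Proposition~\ref{sstrong}, which asserts that any point admitting an imaginary morphism $q$ satisfying \iS1 is a stably strong point; this applies verbatim to $(f,s)$. Hence every point over $Y$ is stably strong, which is exactly the statement that $Y$ is a protomodular object.

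There is no real obstacle: the corollary is a formal consequence of Proposition~\ref{sstrong} and the definition recalled from~\cite{2Chs}. The only point worth a word of care is that one must use the \emph{stably} strong conclusion of Proposition~\ref{sstrong} (not merely Proposition~\ref{strong}), so that the pullbacks of the given point along all morphisms are covered; this is precisely what makes $Y$ a protomodular object rather than just an object all of whose points are strong.
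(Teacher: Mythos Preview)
Your proof is correct and follows essentially the same approach as the paper: apply Proposition~\ref{sstrong} to conclude that every point over $Y$ is stably strong, which is the definition of $Y$ being a protomodular object. The paper's proof is simply the one-line sentence ``All points over $Y$ are stably strong by Proposition~\ref{sstrong},'' so your argument matches it exactly, just with more detail spelled out.
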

\begin{proof} All points over $Y$ are stably strong by Proposition~\ref{sstrong}.
\end{proof}

The converse implication is false in general, as we will show in Section~\ref{special}.

We prove now that, if we apply our intrinsic definition to the category $\Mon$ of monoids, we regain the original definition of a Schreier split epimorphism (=~right homogeneous split epimorphism). Also left homogeneous and homogeneous split epimorphisms (see Remark~\ref{homogeneous}) fit the picture.

\begin{theorem}\label{Theorem Monoids}
In the case of monoids, the intrinsic Schreier split epimorphisms with respect to the direct imaginary splitting $t^d$ are precisely the Schreier split epimorphisms. 
Similarly, the intrinsic Schreier split epimorphisms with respect to the twisted imaginary splitting $t^w$ are the left homogeneous split epimorphisms. 

Hence the homogeneous split epimorphisms are the intrinsic Schreier split epimorphisms with respect to \emph{all} natural imaginary splittings. 
\end{theorem}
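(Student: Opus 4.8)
The proof has two halves: (i) an intrinsic Schreier split epimorphism with respect to $t^d$ (resp.\ $t^w$) is a right (resp.\ left) homogeneous split epimorphism; (ii) conversely a right (resp.\ left) homogeneous split epimorphism admits an imaginary retraction satisfying \iS1 and \iS2 for $t^d$ (resp.\ $t^w$). The key translation device is Example~\ref{Varieties Comonad}: in $\Mon$, $P$ is the free monoid comonad, an imaginary morphism $q\colon X\ito K$ is literally a set-map $\overline q\colon X\to K$ (evaluated on one-letter words), and $t^d_{A,B}([(a,b)])=\underline a\,\overline b$, $t^w_{A,B}([(a,b)])=\overline b\,\underline a$. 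I would first record how the two diagrams in \iS1--\iS2 evaluate on generators. For \iS1 with $t=t^d$: $P(\langle 1_{P(X)},1_{P(X)}\rangle)\delta_X$ sends a generator $[x]\in P(X)$ to $[([x],[x])]\in P(P(X)\times P(X))$, then $t^d$ sends it to $\underline{[x]}\,\overline{[x]}\in P(X)+P(X)$, and $\bi{kq}{sf\varepsilon_X}$ sends this to $kq([x])\cdot sf(x)=k\overline q(x)\cdot sf(x)$; the equation \iS1 says this equals $\varepsilon_X([x])=x$, i.e.\ exactly \S1. With $t=t^w$ one gets $sf(x)\cdot k\overline q(x)=x$, the left-homogeneous version of \S1. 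Likewise \iS2 with $t^d$ evaluates: $t^d_{K,Y}$ sends $[(a,y)]$ to $\underline a\,\overline y$, $P(\bi ks)$ sends the resulting word to (the one-letter word on) $k(a)\cdot s(y)$, and $q$ applied to that reads $\overline q(k(a)\cdot s(y))$; the right-hand side is $\pi_K(a,y)=a$, so \iS2 is exactly \S2. With $t^w$ one obtains $\overline q(s(y)\cdot k(a))=a$, which is the left-homogeneous analogue of \S2.

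**Carrying it out.** Having made these evaluations, the forward direction (i) is immediate: an intrinsic Schreier point for $t^d$ yields a function $\overline q\colon X\to K$ satisfying \S1 and \S2, hence by \cite[Proposition~2.4]{BM-FMS2} (recalled in Section~\ref{Schreier split extensions of monoids}) it is a Schreier split epimorphism; for $t^w$ one gets the pair of identities characterising left homogeneity, and by the left-right symmetric version of that same proposition (mirror the argument of \cite[Proposition~2.4]{BM-FMS2}, or apply it in $\Mon^{\mathrm{op}}$-flavoured form) the point is left homogeneous. For the converse (ii), given a Schreier split epimorphism with set-theoretic retraction $q\colon X\dashrightarrow K$, define the imaginary retraction to be the unique monoid morphism $P(X)\to K$ extending $q$ (Example~\ref{Varieties Comonad}); the computations above show \S1 $\Rightarrow$ \iS1 and \S2 $\Rightarrow$ \iS2 on generators, and since all the morphisms involved are monoid homomorphisms, agreement on the generators of $P(X)$ (resp.\ $P(K\times Y)$) gives the required equalities of morphisms. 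The left-homogeneous case is symmetric with $t^w$.

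**Last statement.** For the final assertion, suppose $(f,s)$ is homogeneous, i.e.\ both left and right homogeneous. Then by the two halves already proved it is an intrinsic Schreier split epimorphism with respect to $t^d$ and with respect to $t^w$. By Proposition~\ref{td and tw}, $t^d$ and $t^w$ are the \emph{only} natural imaginary splittings in $\Mon$, so $(f,s)$ is an intrinsic Schreier point with respect to every natural imaginary splitting. Conversely, if $(f,s)$ is an intrinsic Schreier point with respect to all natural imaginary splittings, then in particular with respect to $t^d$ and $t^w$, hence it is both right and left homogeneous, i.e.\ homogeneous. (One should note the retractions for $t^d$ and $t^w$ may differ — for right homogeneity $x=k\overline q(x)\cdot sf(x)$, for left homogeneity $x=sf(x)\cdot k\overline q'(x)$ — but homogeneity only requires existence and uniqueness of each, which is what the two cases supply.)

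**Main obstacle.** The only genuinely delicate point is bookkeeping: tracking the comonad structure maps $\delta$ and the functor $P$ through the diagrams and checking that the generator-wise evaluation of $t^d$ and $t^w$ is as claimed (in particular that $P(\langle 1_{P(X)},1_{P(X)}\rangle)\delta_X$ really produces the diagonal one-letter word $[([x],[x])]$, using that $\delta_X([x])=[[x]]$). Once the evaluations of \iS1 and \iS2 on generators are pinned down, everything reduces to \S1--\S2 and to citing \cite[Proposition~2.4]{BM-FMS2} together with Proposition~\ref{td and tw}; no further computation is needed. The left-handed case requires a mirror-image of \cite[Proposition~2.4]{BM-FMS2}, whose proof is the obvious symmetric one and which I would state explicitly if it is not already available in \cite{SchreierBook}.
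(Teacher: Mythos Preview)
Your proposal is correct and follows essentially the same approach as the paper's own proof: evaluate the composites in \iS1 and \iS2 on generators of the free monoid to recover \S1 and \S2 (and their left-handed analogues for $t^w$), pass from the set-map $\overline q$ to the monoid morphism $q$ via the free-monoid adjunction for the converse, and invoke Proposition~\ref{td and tw} for the final claim. The only cosmetic difference is that you spell out both directions of the final equivalence and flag the need for a left-handed analogue of \cite[Proposition~2.4]{BM-FMS2}, whereas the paper simply says the $t^w$ case is ``similar''.
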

\begin{proof} Let~\eqref{split ext of monoids} be an intrinsic Schreier split extension of monoids with respect to $t^d$. Then there exists an imaginary retraction $q\colon P(X)\to K$ such that \iS1 and \iS2 hold, where $P(X)$ is the free monoid on $X$. We define the function
$$
\xymatrix@R=1pt@C=5pt{ X\; \ar@{-->}[rr]  \ar@{-->}@/^3.5ex/[rrrr]^-{q'} & & P(X) \ar[rr] && K \\
    x & \mapsto & [x] & \mapsto & q([x])}
$$
to be the Schreier retraction in $\Mon$. From \iS1 we prove \S1: for all $x\in X$,
\begin{align*}
& \bi{kq}{sf\varepsilon_X}t^d_{P(X),P(X)}P(\langle 1_{P(X)},1_{P(X)}\rangle)\delta_X([x]) = \varepsilon_X([x])\\
 \Leftrightarrow \quad {} & \bi{kq}{sf\varepsilon_X}t^d_{P(X),P(X)}P(\langle 1_{P(X)},1_{P(X)}\rangle)([[x]]) = x \\
 \Leftrightarrow \quad {} & \bi{kq}{sf\varepsilon_X}t^d_{P(X),P(X)}([([x],[x])]) = x \\
 \Leftrightarrow \quad {} & \bi{kq}{sf\varepsilon_X}(\underline{[x]}\cdot \overline{[x]}) = x \\
 \Leftrightarrow \quad {} & kq([x])\cdot sf(x) = x \\
 \Leftrightarrow \quad {} & kq'(x)\cdot sf(x) = x.	
\end{align*}
Similarly, from \iS2 we prove \S2: for all $a\in K$, $y\in Y$,
\begin{align*}
& qP(\bi{k}{s})P(t^d_{K,Y})\delta_{K\times Y}([(a,y)]) = \pi_K \varepsilon_{K\times Y}([(a,y)])\\
 \Leftrightarrow \quad {} & qP(\bi{k}{s})P(t^d_{K,Y})([[(a,y)]]) = \pi_K (a,y) \\
 \Leftrightarrow \quad {} & qP(\bi{k}{s})([\underline{a}\cdot\overline{y}]) = a \\
 \Leftrightarrow \quad {} & q([k(a)\cdot s(y)]) = a \\
 \Leftrightarrow \quad {} & q'(k(a)\cdot s(y)) = a.	
\end{align*}

Conversely, suppose that~\eqref{split ext of monoids} is a Schreier split extension of monoids with Schreier retraction $q'$. The map $q'$ determines a unique morphism of monoids $q\colon P(X)\to K$ given by
$$
 q([x_1x_2\cdots x_n])=q([x_1]\cdot[x_2]\cdot\;\cdots\;\cdot [x_n])=q'(x_1)\cdot q'(x_2)\cdot\;\cdots\; \cdot q'(x_n).
$$
In particular, on the generators we have $q([x])=q'(x)$, $\forall x\in X$, as above. Then~\eqref{split ext of monoids} is an intrinsic Schreier extension with respect to $t^d$. Indeed, to prove \iS1 and~\iS2 it suffices to check these equalities for the generators $[x]$, for all $x\in X$, and~$[(a,y)]$, for all $a\in K$ and $y\in Y$, respectively. They follow immediately from \S1 and \S2 and the fact that $q([x])=q'(x)$ for all $x\in X$, as for the previous implication.

The proof for left homogeneous split epimorphisms is similar: replace $t^d$ by~$t^w$. Thanks to Proposition~\ref{td and tw}, these are the only two natural imaginary splittings, which proves the final claim.
\end{proof}

\begin{remark}
It is not difficult to extend this result's first two statements (on left and right homogeneous split epimorphisms) to J\'onsson--Tarski varieties. If $\theta$ is an $n$-ary operation, we have
\[
q(\theta([x_1],\dots, [x_n]))=\theta(q'(x_1),\dots, q'(x_n)).
\]
The third statement is not valid, though, since according to Example~\ref{GroupsDirectNorTwisted}, in general these two cases do not cover all possible choices of a natural imaginary splitting $t$.
\end{remark}

\begin{remark}
The final statement in Theorem~\ref{Theorem Monoids} tells us that homogeneous split epimorphisms of monoids are in some sense ``more intrinsic'' than left or right homogeneous ones, since they do not depend on a choice of natural imaginary splitting, and thus admit a ``purely categorical'' description.
\end{remark}

\section{Stability properties}
The aim of this section is to show that any regular unital category $\C$ with binary coproducts, functorial projective covers and a natural imaginary splitting $t$ is an $\s$-protomodular category with respect to the class $\s$ of intrinsic Schreier split epimorphisms. First of all, we show that the class $\s$ is stable under pullbacks, which gives a subfibration of the fibration of points.

\begin{proposition}\label{stability pbs} Intrinsic Schreier split extensions are stable under arbitrary pullbacks.
\end{proposition}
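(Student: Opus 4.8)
The plan is to reduce everything to a single diagram chase verifying \iS2, because \iS1 has essentially already been dealt with. Concretely, I would take the pullback of the intrinsic Schreier split extension~\eqref{iSchreier} along an arbitrary morphism $g\colon Z\to Y$, drawn exactly as in diagram~\eqref{arbitraty pb}, so that the kernel of $\pi_Z$ is $\langle 0,k\rangle\colon K\rightarrowtail Z\times_Y X$ with chosen section $\langle 1_Z, sg\rangle$. By the observation made right after the proof of Proposition~\ref{sstrong}, the point $(\pi_Z,\langle 1_Z, sg\rangle)$, equipped with the imaginary retraction $q'=qP(\pi_X)\colon P(Z\times_Y X)\to K$, already satisfies \iS1. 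Hence the whole statement comes down to checking that this same $q'$ satisfies \iS2.

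For the \iS2 part, the identity to establish is
$$
q'P\bigl(\bi{\langle 0,k\rangle}{\langle 1_Z, sg\rangle}\bigr)P(t_{K,Z})\delta_{K\times Z}=\pi_K\varepsilon_{K\times Z}.
$$
The first step is to note that $\pi_X\bi{\langle 0,k\rangle}{\langle 1_Z,sg\rangle}=\bi{k}{sg}=\bi{k}{s}(1_K+g)$, so that, by functoriality of $P$, the left-hand side becomes $qP\bigl(\bi{k}{s}\bigr)P(1_K+g)P(t_{K,Z})\delta_{K\times Z}$. Next I would push ``$g$'' past $t$ and $\delta$: applying $P$ to the naturality square~\eqref{naturality of t} for the pair $(1_K,g)$ gives $P(1_K+g)P(t_{K,Z})=P(t_{K,Y})P^2(1_K\times g)$, and then~\eqref{naturality of delta} gives $P^2(1_K\times g)\delta_{K\times Z}=\delta_{K\times Y}P(1_K\times g)$. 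At this point the left-hand side has become $qP\bigl(\bi{k}{s}\bigr)P(t_{K,Y})\delta_{K\times Y}P(1_K\times g)$, so I can invoke \iS2 for the original extension to replace the prefix by $\pi_K\varepsilon_{K\times Y}$. A last application of the naturality of $\varepsilon$~\eqref{naturality of epsilon}, together with $\pi_K(1_K\times g)=\pi_K$, collapses this to $\pi_K\varepsilon_{K\times Z}$, which is exactly what we want.

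I do not expect a real obstacle here. Once one realises that \iS1 is already handled by the computation inside the proof of Proposition~\ref{sstrong}, the remaining work is a routine chase with the comonad axioms and the naturality of $t$. The only mild subtleties are bookkeeping ones: one must use the \emph{same} imaginary retraction $q'=qP(\pi_X)$ for both \iS1 and \iS2, and one must keep track of which factor of the product $K\times(-)$ the naturality squares act on, so that the reduction $\pi_K(1_K\times g)=\pi_K$ is available at the last step. It is also worth recording explicitly that, because the pullback preserves the kernel object $K$, condition \iS2 for the pullback point is literally the displayed equality above, with no identification of objects left implicit.
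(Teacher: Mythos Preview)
Your proposal is correct and follows essentially the same approach as the paper: invoke the observation after Proposition~\ref{sstrong} that $q'=qP(\pi_X)$ already satisfies \iS1 for the pullback point, then verify \iS2 by the chain $\pi_X\bi{\langle 0,k\rangle}{\langle 1_Z,sg\rangle}=\bi{k}{s}(1_K+g)$, naturality of $t$, naturality of $\delta$, \iS2 for the original extension, and naturality of $\varepsilon$. The paper's proof is literally the same computation, written out line by line.
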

\begin{proof}
Consider an intrinsic Schreier split extension and an arbitrary pullback as in~\eqref{arbitraty pb}. We know from Proposition~\ref{sstrong} that $q'=qP(\pi_X)\colon P(Z\times_Y X)\to K$ satisfies \iS1 for the split epimorphism $(\pi_Z,\langle 1_Z,sg\rangle)$. To prove \iS2 we calculate
\begin{align*}
&q P(\pi_X)P(\bi{\langle 0,k\rangle}{\langle 1_Z,sg\rangle})P(t_{K,Z}) \delta_{K\times Z} \\
= \; {} & q P(\bi{k}{sg})P(t_{K,Z}) \delta_{K\times Z} \\
= \; {} &q P(\bi{k}{s})P(1_K+g)P(t_{K,Z})\delta_{K\times Z} \\
\ontop{\eqref{naturality of t}}{=} \; {} & q P(\bi{k}{s}) P(t_{K,Y}) P^2(1_K\times g))\delta_{K\times Z} \\
\ontop{\eqref{naturality of delta}}{=} \; {} & q P(\bi{k}{s}) P(t_{K,Y})\delta_{K\times Y} P(1_K\times g) \\
 \ontop{\iS2}{=} \; {} & \pi_K \varepsilon_{K\times Y} P(1_K\times g) \\
 \ontop{\eqref{naturality of epsilon}}{=} \; {} & \pi_K (1_K\times g) \varepsilon_{K\times Z}
= \pi_K \varepsilon_{K\times Z}.	\qedhere
\end{align*}
\end{proof}

We have already explained that every intrinsic Schreier split epimorphism is a (stably) strong point (see Proposition~\ref{sstrong}). What remains to be shown---see Definition~\ref{S-protomodular category}---is that the full subcategory $\SPt(\C)$ of intrinsic Schreier points is closed in $\Pt(\C)$ under finite limits.

\begin{proposition} \label{stability products}Intrinsic Schreier split extensions are stable under binary pro\-ducts.
\end{proposition}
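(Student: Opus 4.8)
The plan is to take two intrinsic Schreier split extensions with imaginary retractions and show their product in $\Pt(\C)$ is again an intrinsic Schreier split extension, by building its imaginary retraction from those of the factors.

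Concretely, suppose we have intrinsic Schreier split extensions
\[
\xymatrix@!0@=4em{ K_i \ar@{ |>->}[r]_-{k_i} & X_i \ar@<-.5ex>@{>>}[r]_-{f_i} & Y_i \ar@<-.5ex>[l]_-{s_i}}
\]
for $i=1,2$, with imaginary retractions $q_i\colon P(X_i)\to K_i$. Their product in $\Pt(\C)$ is the split extension
\[
\xymatrix@!0@=4em{ K_1\times K_2 \ar@{ |>->}[r]_-{k_1\times k_2} & X_1\times X_2 \ar@<-.5ex>@{>>}[r]_-{f_1\times f_2} & Y_1\times Y_2, \ar@<-.5ex>[l]_-{s_1\times s_2}}
\]
since kernels and products of split epimorphisms are computed componentwise. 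First I would propose
\[
q = (q_1\times q_2)\,P(\pi_{X_1}\times\pi_{X_2})\colon P(X_1\times X_2)\to K_1\times K_2,
\]
where $\langle P(\pi_{X_1}), P(\pi_{X_2})\rangle$ could also be used; in other words $q$ is the pairing of $q_1 P(\pi_{X_1})$ and $q_2 P(\pi_{X_2})$. The task then splits into verifying \iS1 and \iS2 for this $q$.

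For \iS1 I would check the two components separately, composing with $\pi_{K_1}$ and $\pi_{K_2}$; by symmetry it suffices to treat the first. Using that $\pi_{K_1}(k_1\times k_2)=k_1\pi_{X_1}$ and $\pi_{Y_1}(f_1\times f_2)=f_1\pi_{X_1}$, together with naturality of $\varepsilon$ \eqref{naturality of epsilon}, the left-hand side of \iS1 postcomposed with $\pi_{X_1}$ rewrites, after pushing $P(\pi_{X_1}\times\pi_{X_2})$ through $t$ via naturality \eqref{naturality of t} and through $\langle 1,1\rangle$ and $\delta$ via \eqref{naturality of delta}, into exactly $\bi{k_1q_1}{s_1f_1\varepsilon_{X_1}}t_{P(X_1),P(X_1)}P(\langle 1_{P(X_1)},1_{P(X_1)}\rangle)\delta_{X_1}P(\pi_{X_1})$, which by \iS1 for the first extension equals $\varepsilon_{X_1}P(\pi_{X_1})=\pi_{X_1}\varepsilon_{X_1\times X_2}$ by \eqref{naturality of epsilon}. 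This manipulation is essentially the same computation as the one carried out for $\pi_X$ in the proof of Proposition~\ref{sstrong}, applied to the projection $\pi_{X_1}$ in place of $\pi_X$. For \iS2 I would likewise work componentwise: using $\bi{k_1\times k_2}{s_1\times s_2}$ and the naturality of $t$ to commute it past $t_{K_1\times K_2,\,Y_1\times Y_2}$, then reduce, via \eqref{naturality of delta} and \eqref{naturality of epsilon}, to \iS2 for each factor separately, exactly as in the proof of Proposition~\ref{stability pbs}.

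The main obstacle, such as it is, is purely bookkeeping: keeping track of the many functors $P$, the coproduct injections, the product projections, and the coherence maps $\delta$, $\varepsilon$, $t$, and making sure that the componentwise verifications really do assemble into the single equation \iS1 (resp. \iS2) for the product — this is guaranteed because a morphism into a product is determined by its components, so checking \iS1 after composing with each $\pi_{K_i}$ suffices. Once the componentwise reductions are set up, each piece is literally an instance of an identity already proved (for the factor extensions), so no genuinely new idea is required; the argument is a routine, if notationally heavy, diagram chase.
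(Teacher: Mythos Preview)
Your approach is correct and essentially identical to the paper's: define the imaginary retraction for the product as $\langle q_1 P(\pi_{X_1}), q_2 P(\pi_{X_2})\rangle$ and verify \iS1 and \iS2 componentwise, reducing via naturality of $t$, $\delta$, $\varepsilon$ to the corresponding axioms for the two factors. Two minor notational slips to clean up: \iS1 is an equation in $X_1\times X_2$, so one postcomposes with $\pi_{X_i}$ (as you in fact do mid-paragraph), not $\pi_{K_i}$; and $P(\pi_{X_1}\times\pi_{X_2})$ does not typecheck as written---your parenthetical $\langle P(\pi_{X_1}),P(\pi_{X_2})\rangle$ is the intended map and is exactly what the paper uses.
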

\begin{proof} Let
\[
\xymatrix@1@=3em{K \ar@{ |>->}[r]_-{k} & X \ar@<-.5ex>@{>>}[r]_-{f} & Y \ar@<-.5ex>[l]_-{s}}
\qquad
\text{and}
\qquad
\xymatrix@1@=3em{K' \ar@{ |>->}[r]_-{k'} & X' \ar@<-.5ex>@{>>}[r]_-{f'} & Y' \ar@<-.5ex>[l]_-{s'}}
\]
be intrinsic Schreier split extensions. Suppose that the imaginary retractions are $q\colon P(X)\to K$ and $q'\colon$ $P(X')\to K'$, respectively. We want to prove that
\[
\xymatrix@1@=3em{K\times K' \ar@{ |>->}[r]_-{k\times k'} & X\times X' \ar@<-.5ex>@{>>}[r]_-{f\times f'} & Y\times Y' \ar@<-.5ex>[l]_-{s\times s'}}
\]
is an intrinsic Schreier split extension. As imaginary retraction we use the imaginary morphism $\rho=(q\times q') \langle P(\pi_X), P(\pi_{X'})\rangle\colon P(X\times X')\to K\times K'$. For \iS1 we must prove that
\begin{multline*}
\varepsilon_{X\times X'}
=
\bi{(k\times k')\rho}{(s\times s')(f\times f')\varepsilon_{X\times X'}}\\
\circ t_{P(X\times X'),P(X\times X')}P(\langle 1_{P(X\times X')}, 1_{P(X\times X')}\rangle)\delta_{X\times X'}.
\end{multline*}
We have
\begin{align*}
&\pi_X \bi{(k\times k')\rho}{(s\times s')(f\times f')\varepsilon_{X\times X'}}\\
&\quad\circ t_{P(X\times X'),P(X\times X')} P(\langle 1_{P(X\times X')}, 1_{P(X\times X')}\rangle)\delta_{X\times X'} \\
 \ontop{\eqref{naturality of epsilon}}{=} {} &\bi{kqP(\pi_X)}{sf\varepsilon_XP(\pi_X)}t_{P(X\times X'),P(X\times X')}P(\langle 1_{P(X\times X')}, 1_{P(X\times X')}\rangle)\delta_{X\times X'} \\
 ={} & \bi{kq}{sf\varepsilon_X}(P(\pi_X)+P(\pi_X))\\
 &\quad \circ t_{P(X\times X'),P(X\times X')}P(\langle 1_{P(X\times X')}, 1_{P(X\times X')}\rangle)
 \delta_{X\times X'}\\
 \ontop{\eqref{naturality of t}}{=} {} &\bi{kq}{sf\varepsilon_X} t_{P(X),P(X)} P(P(\pi_X)\times P(\pi_X))P(\langle 1_{P(X\times X')}, 1_{P(X\times X')}\rangle)\delta_{X\times X'} \\
 ={} & \bi{kq}{sf\varepsilon_X} t_{P(X),P(X)} P(\langle P(\pi_X),P(\pi_X)\rangle)\delta_{X\times X'} \\
 = {} &\bi{kq}{sf\varepsilon_X} t_{P(X),P(X)} P(\langle 1_{P(X)},1_{P(X)} \rangle)P^2(\pi_X)\delta_{X\times X'} \\
 \ontop{\eqref{naturality of delta}}{=}{} & \bi{kq}{sf\varepsilon_X} t_{P(X),P(X)} P(\langle 1_{P(X)},1_{P(X)} \rangle)\delta_X P(\pi_X)\\
\ontop{\iS1}{=} {} &\varepsilon_X P(\pi_X)
 \ontop{\eqref{naturality of epsilon}}{=}\pi_X \varepsilon_{X\times X'} .	
\end{align*}
The proof that
\begin{multline*}
\pi_{X'} \varepsilon_{X\times X'}=
\pi_{X'} \bi{(k\times k')\rho}{(s\times s')(f\times f'\varepsilon_{X\times X'})}\\
\circ t_{P(X\times X'),P(X\times X')}
 P(\langle 1_{P(X\times X')}, 1_{P(X\times X')}\rangle)\delta_{X\times X'}
\end{multline*}
is similar.
For \iS2 we must show that
\[
 \rho P(\bi{k\times k'}{s\times s'}) P(t_{K\times K', Y\times Y'}) \delta_{K\times K'\times Y\times Y'}= \pi_{K\times K'} \varepsilon_{K\times K'\times Y\times Y'}.
\]
We have
\begin{align*}
& \pi_K \rho P(\bi{k\times k'}{s\times s'}) P(t_{K\times K', Y\times Y'})\delta_{K\times K'\times Y\times Y'} \\
 = {} & qP(\pi_X) P(\bi{k\times k'}{s\times s'}) P(t_{K\times K',Y\times Y'})\delta_{K\times K'\times Y\times Y'} \\
 = {} &q P( \bi{k}{s} (\pi_K+\pi_Y) t_{K\times K', Y\times Y'})\delta_{K\times K'\times Y\times Y'} \\
 \ontop{\eqref{naturality of t}}{=} {} & q P( \bi{k}{s} t_{K,Y} P(\pi_K\times \pi_Y) )\delta_{K\times K'\times Y\times Y'} \\
 = {} &q P( \bi{k}{s}) P( t_{K,Y}) P^2(\pi_K\times \pi_Y) \delta_{K\times K'\times Y\times Y'}\\
 \ontop{\eqref{naturality of delta}}{=} {} &q P( \bi{k}{s}) P( t_{K,Y})\delta_{K\times Y} P(\pi_K\times \pi_Y)\\
\ontop{\iS2}{=} {} &\pi_K\varepsilon_{K\times Y} P(\pi_K\times \pi_Y) \\
 \ontop{\eqref{naturality of epsilon}}{=} {} &\pi_K (\pi_K\times \pi_Y) \varepsilon_{K\times K'\times Y\times Y'}
 = \pi_K \pi_{K\times K'} \varepsilon_{K\times K'\times Y\times Y'}.	
\end{align*}
The proof that
\[
 \pi_{K'}\rho P(\bi{k\times k'}{s\times s'}) P(t_{K\times K', Y\times Y'})\delta_{K\times K'\times Y\times Y'} = \pi_{K'}\pi_{K\times K'} \varepsilon_{K\times K'\times Y\times Y'}
\]
is similar.
\end{proof}

\begin{proposition}\label{stability equalisers in Pt_CC}
Intrinsic Schreier split extensions in $\C$ are closed under equalisers in $\Pt(\C)$.
\end{proposition}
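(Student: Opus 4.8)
The plan is to mimic the strategy used for pullbacks (Proposition~\ref{stability pbs}) and binary products (Proposition~\ref{stability products}): identify the equaliser explicitly, construct a candidate imaginary retraction, and verify \iS1 and \iS2 by diagram chasing with the comonad identities. So suppose we are given two parallel morphisms of points between intrinsic Schreier split extensions
\[
\xymatrix@!0@=4em{ K \ar@{ |>->}[r]_-k \ar@<-.5ex>[d]_-{\widetilde{g}_1} \ar@<.5ex>[d]^-{\widetilde{g}_2} & X \ar@<-.5ex>@{>>}[r]_-f \ar@<-.5ex>[d]_-{g_1} \ar@<.5ex>[d]^-{g_2} & Y \ar@<-.5ex>[l]_-s \ar@<-.5ex>[d]_-{h_1} \ar@<.5ex>[d]^-{h_2}\\
 K' \ar@{ |>->}[r]_-{k'} & X' \ar@<-.5ex>@{>>}[r]_-{f'} & Y' \ar@<-.5ex>[l]_-{s'}}
\]
with imaginary retractions $q$ on $(f,s)$ and $q'$ on $(f',s')$. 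Since limits in $\Pt(\C)$ are computed componentwise over the base, the equaliser of $(g_1,h_1)$ and $(g_2,h_2)$ in $\Pt(\C)$ is the split epimorphism $\bar f\colon \bar X\to \bar Y$ obtained by taking $\bar X=\mathrm{Eq}(g_1,g_2)$ with inclusion $e_X\colon \bar X\hookrightarrow X$, $\bar Y=\mathrm{Eq}(h_1,h_2)$ with inclusion $e_Y\colon \bar Y\hookrightarrow Y$, restricting $f$ and $s$, and its kernel is $\bar K=\mathrm{Eq}(\widetilde{g}_1,\widetilde{g}_2)$ with inclusion $e_K\colon \bar K\hookrightarrow K$; here one uses that $\C$ is regular (hence the relevant equalisers exist) and that kernels commute with these limits.

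Next I would define the candidate imaginary retraction. The natural guess, by analogy with the pullback case where $q'=qP(\pi_X)$, is to set $\bar q\colon P(\bar X)\to \bar K$ to be the unique factorisation of $qP(e_X)\colon P(\bar X)\to K$ through the equaliser $e_K\colon \bar K\hookrightarrow K$. For this to be well-defined one must check that $\widetilde{g}_1\,qP(e_X)=\widetilde{g}_2\,qP(e_X)$; this follows from the compatibility of morphisms of intrinsic Schreier extensions with their retractions, Proposition~\ref{compatibility of q}, which gives $\widetilde{g}_i q=q'P(g_i)$ for $i=1,2$, combined with $g_1e_X=g_2e_X$ (so $P(g_1)P(e_X)=P(g_2)P(e_X)$). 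Hence $\widetilde{g}_1qP(e_X)=q'P(g_1)P(e_X)=q'P(g_2)P(e_X)=\widetilde{g}_2qP(e_X)$, and $\bar q$ exists with $e_K\bar q=qP(e_X)$.

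Then I would verify \iS1 and \iS2 for $(\bar f,\bar s)$ with retraction $\bar q$. The key point is that $e_K$, $e_X$, $e_Y$ are monomorphisms, so to prove an equality of morphisms into $\bar X$ (resp.\ $\bar K$) it suffices to prove it after composing with $e_X$ (resp.\ $e_K$); then one transports the whole computation along these monomorphisms into the already-known identities \iS1, \iS2 for $q$, using naturality of $t$ \eqref{naturality of t}, naturality of $\varepsilon$ and $\delta$ \eqref{naturality of epsilon}, \eqref{naturality of delta}, the counit identity \eqref{counit}, and the relations $e_X\bar q$ replaced via $e_K\bar q=qP(e_X)$, together with $fe_X=e_Y\bar f$, $se_Y=e_Xs\cdot$ (restriction), $ke_K=e_Xk\cdot$ (restriction). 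Concretely, for \iS1 one composes $\bi{k\bar q}{\bar s\bar f\varepsilon_{\bar X}}t_{P(\bar X),P(\bar X)}P(\langle 1,1\rangle)\delta_{\bar X}$ with the mono $e_X$, pushes $e_X$ through using that it commutes with $k$, $s$, $f$ up to the comparison maps, and reduces to $\varepsilon_X P(e_X)=e_X\varepsilon_{\bar X}$; \iS2 is analogous but easier, essentially a direct substitution as in the last line of the proof of Proposition~\ref{stability pbs}. The main obstacle I anticipate is purely bookkeeping: correctly describing how the equaliser in $\Pt(\C)$ is built (in particular that its kernel is the equaliser of the $\widetilde{g}_i$) and threading the three monomorphisms $e_K,e_X,e_Y$ through the rather long composite expressions defining \iS1 without sign- or index-errors; there is no conceptual difficulty once Proposition~\ref{compatibility of q} is invoked to make $\bar q$ well-defined. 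Together with Propositions~\ref{strong}, \ref{stability pbs}, \ref{stability products} and this proposition, one concludes that $\SPt(\C)$ is closed under finite limits in $\Pt(\C)$, and hence that $\C$ is $\s$-protomodular with respect to intrinsic Schreier split epimorphisms.
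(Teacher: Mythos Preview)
Your proposal is correct and follows essentially the same approach as the paper: define the imaginary retraction on the equaliser as the factorisation of $qP(e_X)$ through the mono $e_K$ using Proposition~\ref{compatibility of q}, then verify \iS1 and \iS2 by postcomposing with the equaliser monomorphisms and reducing, via naturality of $t$, $\varepsilon$, $\delta$, to the corresponding identities for $(f,s)$. The only cosmetic differences are notation (the paper writes $e,w,\widetilde{e},\gamma$ for your $e_X,e_Y,e_K,\bar q$) and that the paper spells out the two chains of equalities explicitly rather than leaving them as a sketch.
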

\begin{proof}
In the following diagram, the middle and right vertical extensions are intrinsic Schreier split extensions. Consider the morphisms $(g,u)$ and $(h,v)$ between them and the induced morphisms $\widetilde{g}$ and $\widetilde{h}$. Let $e$ be the equaliser of~$(g,h)$, $w$ be the equaliser of $(u,v)$, $\varphi$ and $\sigma$ be the induced morphisms and $L$ the kernel of $\varphi$
$$
\xymatrix@R=30pt@C=40pt{L \ar@{ >->}[r]^-{\widetilde{e}} \ar@{ |>->}[d]_-l & K \ar@<.5ex>[r]^-{\widetilde{g}} \ar@{ |>->}[d]_-k \ar@<-.5ex>[r]_-{\widetilde{h}} &
\ar@{ |>->}[d]_-{k'} K' \\
 E \ar@{ >->}[r]^-{e} \ar@<-.5ex>@{>>}[d]_-{\varphi} & X \ar@<.5ex>[r]^-g \ar@<-.5ex>[r]_-h \ar@<-.5ex>@{>>}[d]_-f & X' \ar@<-.5ex>@{>>}[d]_-{f'}\\
 W \ar@<-.5ex>[u]_-{\sigma} \ar@{ >->}[r]_-w & Y \ar@<-.5ex>[u]_-s \ar@<.5ex>[r]^-u \ar@<-.5ex>[r]_-v & Y'. \ar@<-.5ex>[u]_-{s'}}
$$
Since limits commute with limits, we know that the induced morphism $\widetilde{e}$ is the equaliser of $(\widetilde{g}, \widetilde{h})$. Moreover, $(e,w)$ is also the equaliser of $((g,u), (h,v))$ in~$\Pt(\C)$.

We want to prove that the left vertical extension above is an intrinsic Schreier split extension. Suppose $q\colon P(X)\to K$ and $q'\colon P(X')\to K'$ are the imaginary retractions for $(f,s)$ and $(f',s')$, respectively. The morphism $\gamma\colon P(E)\to L$,
\begin{equation*}\label{gamma}
\vcenter{\xymatrix{L \ar[r]^-{\widetilde{e}} & K \ar@<.5ex>[r]^-{\widetilde{g}} \ar@<-.5ex>[r]_-{\widetilde{h}} & K' \\
 P(E) \ar@{.>}[u]^-{\gamma} \ar[r]_-{P(e)} & P(X) \ar[u]_-q}}
\end{equation*}
which is induced by the universal property of the equaliser $\widetilde{e}$ because
\[
\widetilde{g} q P(e) \ontop{\ref{compatibility of q}}{=} q'P(g)P(e) = q' P(ge)=q'P(he) = \cdots =\widetilde{h} q P(e),
\]
gives the imaginary retraction with respect to $(\varphi, \sigma)$. We prove \iS1 by using that $e$ is a monomorphism:
\begin{align*}
&e\bi{l\gamma}{\sigma \varphi\varepsilon_E} t_{P(E),P(E)}P(\langle 1_{P(E)}, 1_{P(E)}\rangle)\delta_E\\
 = {} & \bi{el\gamma}{e \sigma \varphi\varepsilon_E}t_{P(E),P(E)}P(\langle 1_{P(E)}, 1_{P(E)}\rangle)\delta_E \\
 \ontop{\eqref{naturality of epsilon}}{=} {} & \bi{kqP(e)}{sf\varepsilon_XP(e)}t_{P(E),P(E)}P(\langle 1_{P(E)}, 1_{P(E)}\rangle)\delta_E \\
 = {} & \bi{k q}{sf\varepsilon_X}(P(e)+P(e))t_{P(E),P(E)}P(\langle 1_{P(E)}, 1_{P(E)}\rangle)\delta_E \\
 \ontop{\eqref{naturality of t}}{=} {} & \bi{k q}{sf\varepsilon_X}t_{P(X),P(X)} P(P(e)\times P(e))P(\langle 1_{P(E)}, 1_{P(E)}\rangle)\delta_E \\
 = {} & \bi{k q}{sf\varepsilon_X}t_{P(X),P(X)} P(\langle P(e), P(e)\rangle)\delta_E \\
 = {} & \bi{k q}{sf\varepsilon_X}t_{P(X),P(X)} P(\langle 1_{P(X)}, 1_{P(X)}\rangle) P^2(e)\delta_E \\
 \ontop{\eqref{naturality of delta}}{=} {} & \bi{k q}{sf\varepsilon_X}t_{P(X),P(X)} P(\langle 1_{P(X)}, 1_{P(X)}\rangle)\delta_X P(e)\\
 \ontop{\iS1}{=} {} & \varepsilon_X P(e)
 \ontop{\eqref{naturality of epsilon}}{=} e \varepsilon_E.	
\end{align*}

To prove \iS2 we use that $\widetilde{e}$ is a monomorphism
\begin{align*}
\widetilde{e}\gamma P(\bi{l}{\sigma})P(t_{L,W})\delta_{L\times W} & = q P(e)P(\bi{l}{\sigma})P(t_{L,W})\delta_{L\times W} \\
 & = q P(\bi{el}{e\sigma}t_{L,W})\delta_{L\times W} \\
 & = q P(\bi{k\widetilde{e}}{sw}t_{L,W})\delta_{L\times W} \\
 & = q P(\bi{k}{s}(\widetilde{e}+w)t_{L,W})\delta_{L\times W} \\
 & \ontop{\eqref{naturality of t}}{=} q P( \bi{k}{s} t_{K,Y} P(\widetilde{e}\times w))\delta_{L\times W} \\
 & = q P( \bi{k}{s}) P(t_{K,Y})P^2(\widetilde{e}\times w)\delta_{L\times W} \\
 & \ontop{\eqref{naturality of delta}}{=} q P( \bi{k}{s}) P(t_{K,Y})\delta_{K\times Y}P(\widetilde{e}\times w) \\
 & \ontop{\iS2}{=} \pi_K\varepsilon_{K\times Y}P(\widetilde{e}\times w) \\
 & \ontop{\eqref{naturality of epsilon}}{=} \pi_K (\widetilde{e}\times w) \varepsilon_{L\times W} \\
 & = \widetilde{e} \pi_L \varepsilon_{L\times W}.\qedhere
\end{align*}
\end{proof}

\begin{corollary}\label{closed lims in Pt_CC}
Intrinsic Schreier split extensions in $\C$ are closed under finite limits in $\Pt(\C)$.
\end{corollary}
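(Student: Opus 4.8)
The plan is to reduce Corollary~\ref{closed lims in Pt_CC} to the two stability results already established, namely Proposition~\ref{stability products} (closure under binary products) and Proposition~\ref{stability equalisers in Pt_CC} (closure under equalisers), together with the trivial observation that the terminal object of $\Pt(\C)$ --- the identity point on the zero object, equivalently $(1_0,1_0)$ --- is an intrinsic Schreier point, which is a special case of Corollary~\ref{exs of Schreier}. The standard fact I would invoke is that a full subcategory of a finitely complete category is closed under finite limits as soon as it is closed under finite products and equalisers; since $\Pt(\C)$ is finitely complete (it has a terminal object and all pullbacks, being the domain of a fibration over the finitely complete category $\C$), this criterion applies directly.

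First I would recall that $\Pt(\C)$ is finitely complete: limits in $\Pt(\C)$ are computed componentwise in $\C$, so $\Pt(\C)$ has the terminal object $(1_0 \colon 0 \to 0)$, binary products, and equalisers, hence all finite limits. Next I would note that $\SPt(\C)$, the full subcategory of intrinsic Schreier points, contains the terminal object by Corollary~\ref{exs of Schreier}, is closed under binary products by Proposition~\ref{stability products}, and is closed under equalisers by Proposition~\ref{stability equalisers in Pt_CC}. Then I would conclude by the general lemma that a full, replete subcategory closed under finite products and equalisers is closed under all finite limits --- one writes an arbitrary finite limit as the equaliser of two maps between finite products, and since the subcategory is full the relevant universal properties are inherited.

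Since the hard technical work has already been done in the preceding propositions, there is essentially no obstacle here; the only minor point to be careful about is that the general limit-theoretic lemma requires the ambient category $\Pt(\C)$ to have the finite limits in question and that $\SPt(\C)$ be a \emph{full} subcategory, so that closure "on objects" automatically gives closure "as a subcategory" --- both of which hold by construction. I would therefore present the proof as a short paragraph invoking these facts.

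\begin{proof}
The category $\Pt(\C)$ is finitely complete, since finite limits of points are computed componentwise in $\C$. By Corollary~\ref{exs of Schreier}, the terminal object of $\Pt(\C)$, which is the identity point on $0$, is an intrinsic Schreier point. By Proposition~\ref{stability products}, $\SPt(\C)$ is closed under binary products in $\Pt(\C)$, and by Proposition~\ref{stability equalisers in Pt_CC} it is closed under equalisers. A full replete subcategory of a finitely complete category that is closed under finite products and equalisers is closed under all finite limits: any such limit may be expressed as an equaliser of a pair of morphisms between finite products. Hence $\SPt(\C)$ is closed under finite limits in $\Pt(\C)$.
\end{proof}
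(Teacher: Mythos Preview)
Your proof is correct and follows essentially the same approach as the paper's own proof: invoke Corollary~\ref{exs of Schreier} for the terminal object, Propositions~\ref{stability products} and~\ref{stability equalisers in Pt_CC} for binary products and equalisers, and conclude via the standard fact that these suffice for closure under all finite limits. The paper's proof is simply a terser version of yours, leaving the finite-completeness of $\Pt(\C)$ and the products-plus-equalisers lemma implicit.
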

\begin{proof}
By Corollary~\ref{exs of Schreier}, the terminal object of $\Pt(\C)$, the point $0\leftrightarrows 0$, is an intrinsic Schreier point. Then the result follows by Propositions~\ref{stability products} and~\ref{stability equalisers in Pt_CC}.
\end{proof}

\begin{corollary}\label{unital cat S-protomodular}
Any regular unital category with binary coproducts, functorial projective covers and a natural imaginary splitting is an $\s$-protomodular category with respect to the class $\s$ of intrinsic Schreier split epimorphisms.
\end{corollary}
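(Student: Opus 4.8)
The plan is simply to verify that the class $\s$ of intrinsic Schreier split epimorphisms satisfies all the requirements in Definition~\ref{S-protomodular category}: pullback-stability of $\s$, condition~(1) and condition~(2). Essentially all of the genuine work has already been done in the preceding results, so what remains is to assemble them in the right order.

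First I would observe that the ambient category $\C$ is pointed (being unital) and finitely complete (being regular with binary coproducts), so that the category of points $\Pt(\C)$, the fibration of points $\cod$, and the fibres $\Pt_Y(\C)$ all make sense. Next, by Proposition~\ref{stability pbs}, the class $\s$ of intrinsic Schreier split epimorphisms is stable under pullbacks along arbitrary morphisms; hence, viewed as a full subcategory $\SPt(\C)$ of $\Pt(\C)$, it gives rise to a subfibration $\s$-$\cod$ of $\cod$, exactly as required just before Definition~\ref{S-protomodular category}. This also settles the standing hypothesis of that definition that $\s$ be a pullback-stable class of points.

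Condition~(1) of Definition~\ref{S-protomodular category} — that every point in $\SPt(\C)$ is a strong point — is precisely Proposition~\ref{strong} (indeed, even Proposition~\ref{sstrong} applies, giving the stronger conclusion that every such point is stably strong). Condition~(2) — that $\SPt(\C)$ is closed under finite limits in $\Pt(\C)$ — is exactly Corollary~\ref{closed lims in Pt_CC}, itself obtained by combining Corollary~\ref{exs of Schreier} (the terminal point $0 \leftrightarrows 0$ is an intrinsic Schreier point), Proposition~\ref{stability products} (closure under binary products), and Proposition~\ref{stability equalisers in Pt_CC} (closure under equalisers in $\Pt(\C)$). Putting these two conditions together with pullback-stability yields that $\C$ is $\s$-protomodular.

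There is no real obstacle here; the statement is a formal consequence of the work already carried out, and the proof is a one-line reference to Propositions~\ref{strong} (or \ref{sstrong}) and to Corollary~\ref{closed lims in Pt_CC}. The only points deserving a moment's care are bookkeeping matters: that $\s$ is honestly a class of \emph{points} (each intrinsic Schreier split epimorphism carries a chosen section, and its imaginary retraction $q$ is uniquely determined by Proposition~\ref{uniqueness of q}, so there is no ambiguity), and that the notion of \emph{strong point} invoked in Definition~\ref{S-protomodular category} coincides with the one appearing in Proposition~\ref{strong}. Once these identifications are made, the corollary follows at once.
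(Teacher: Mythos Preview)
Your proposal is correct and mirrors the paper's own approach: the corollary is stated without an explicit proof, as the immediate consequence of pullback-stability (Proposition~\ref{stability pbs}), the strong-point property (Proposition~\ref{strong} or~\ref{sstrong}), and closure under finite limits in $\Pt(\C)$ (Corollary~\ref{closed lims in Pt_CC}). The only quibble is that finite completeness follows from regularity alone, not from binary coproducts, but this does not affect the argument.
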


It is easy to see that the closedness of $\SPt(\C)$ in $\Pt(\C)$ under finite limits implies that, for every object $Y$, the fibre $\SPt_Y(\C) $ is closed in $\Pt_Y(\C) $ under finite limits.

\section{(Intrinsic) Schreier special objects}\label{special}
In this section we investigate $\s$-special objects and the link between them and protomodular objects.

\begin{definition} \label{def special object}
An object $X$ of an $\s$-protomodular category is said to be \defn{$\s$-special} when the point
\begin{equation} \label{point S-special object}
\xymatrix@!0@=6em{ X \ar@{ |>->}[r]_-{\langle 1_X,0 \rangle} & X\times X \ar@<-.5ex>@{>>}[r]_-{\pi_2} & X \ar@<-.5ex>[l]_-{\langle 1_X,1_X \rangle}}
\end{equation}
(or, equivalently, the point $(\pi_1,\langle 1_X,1_X\rangle)$) belongs to the class $\s$.
\end{definition}

\begin{proposition}\cite[Proposition~6.2]{S-proto} \label{special objects protomodular core}
Given an $\s$-protomodular category~$\C$, the full subcategory of $\s$-special objects is a protomodular category, called the \defn{protomodular core} of $\C$ relative to the class $\s$.
\end{proposition}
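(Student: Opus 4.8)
The plan is to show that the full subcategory $\C_\s$ of $\s$-special objects is pointed and finitely complete, and that in it every point is a strong point; this suffices, since a pointed finitely complete category is protomodular precisely when, for every split epimorphism with chosen section, the kernel and the section form a jointly extremal-epimorphic pair. Note that this last condition transfers well to full subcategories: if $k$ and $s$ are jointly extremal-epimorphic in $\C$, then they remain so in $\C_\s$.

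First I would verify that $\C_\s$ is closed under finite limits in $\C$, so that $\C_\s$ is pointed, finitely complete, and its limits coincide with those of $\C$. Limits in $\Pt(\C)$ are computed componentwise on the split epimorphism and on its section, so both projections $\Pt(\C)\to\C$ preserve them. Writing $D_X=(\pi_2\colon X\times X\to X,\ \langle 1_X,1_X\rangle)$ for the diagonal point of an object $X$ — so that, by Definition~\ref{def special object}, $X$ is $\s$-special exactly when $D_X\in\SPt(\C)$ — one observes that $D_0$ is the terminal object of $\Pt(\C)$; that $D_{X\times X'}$ is isomorphic in $\Pt(\C)$ to $D_X\times D_{X'}$ (by rearranging the factors); and that, when $e\colon E\to X$ is the equaliser of $g,h\colon X\to X'$, the point $D_E$ is the equaliser in $\Pt(\C)$ of the induced parallel pair $D_X\rightrightarrows D_{X'}$. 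Since $\SPt(\C)$ is closed under finite limits in $\Pt(\C)$ by condition (2) of Definition~\ref{S-protomodular category}, it follows that the zero object, and any binary product or equaliser of $\s$-special objects, is $\s$-special.

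The core of the argument — and the only step I expect to require an idea rather than a routine verification — is to show that every point $(f\colon X\to B,\ s)$ whose total object $X$ and codomain $B$ are $\s$-special is itself an $\s$-point. For this I would exhibit $(f,s)$ as a finite limit, in $\Pt(\C)$, of points that visibly lie in $\s$. Two such building blocks are available: the pullback $s^{*}(D_X)$ of the diagonal point of $X$ along the section $s\colon B\to X$, which unwinds to the point $(\pi_2\colon X\times B\to B,\ \langle s,1_B\rangle)$ and lies in $\s$ because $\s$ is pullback-stable; and $D_B$ itself, which lies in $\s$ because $B$ is $\s$-special. I would then check that $(f,s)$ is the equaliser, in $\Pt(\C)$, of the two morphisms of points $\langle f\pi_1,\pi_2\rangle,\ \langle\pi_2,\pi_2\rangle\colon s^{*}(D_X)\to D_B$ (both lying over $1_B$, where $\pi_1,\pi_2$ are the projections of $X\times B$): on total objects these two maps agree exactly on the graph of $f$, which is isomorphic to $X$, and under this isomorphism the relevant projection is $f$ and the restricted section is $s$. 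Because $\SPt(\C)$ is closed under equalisers, $(f,s)\in\s$, and hence $(f,s)$ is a strong point by condition (1) of Definition~\ref{S-protomodular category}.

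Combining the two parts, $\C_\s$ is pointed and finitely complete, and each of its points, being a point of $\C$ between $\s$-special objects, is an $\s$-point, hence a strong point in $\C$, hence a strong point in $\C_\s$; therefore $\C_\s$ is protomodular. As a sanity check, for $\C=\Mon$ and $\s$ the class of Schreier split epimorphisms this recovers the facts that the $\s$-special monoids are exactly the groups and that every split epimorphism of monoids between groups is Schreier, the protomodular core being $\Gp$.
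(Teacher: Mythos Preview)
The paper does not give its own proof of this proposition: it is simply cited from~\cite{S-proto}, so there is nothing in the present paper to compare your argument against. That said, your proof is correct and self-contained.

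Your two-step strategy is the natural one. For closure under finite limits, the observation that the assignment $X\mapsto D_X$ sends terminal object, binary products, and equalisers in $\C$ to the corresponding limits in $\Pt(\C)$ is exactly what is needed, and the verifications you sketch go through. For the key step---showing that any point $(f,s)$ between $\s$-special objects lies in $\s$---your construction is clean: pulling back $D_X$ along $s$ produces the $\s$-point $(\pi_2\colon X\times B\to B,\ \langle s,1_B\rangle)$, the two morphisms $\langle f\pi_1,\pi_2\rangle$ and $\langle\pi_2,\pi_2\rangle$ to $D_B$ are indeed morphisms of points over $1_B$ (the section compatibility uses $fs=1_B$), and their equaliser in $\Pt(\C)$ is the graph of $f$, which is canonically isomorphic to $(f,s)$. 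Closure of $\SPt(\C)$ under equalisers then yields $(f,s)\in\s$. Your remark that jointly extremal-epimorphic pairs in $\C$ remain so in the full subcategory $\C_\s$ (since $\C_\s$, being closed under finite limits, reflects monomorphisms) is also correct and completes the argument.
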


If $\C$ is the category $\Mon$ of monoids, and $\s$ is the class of Schreier split epimorphisms, then the protomodular core is the category $\Gp$ of groups (\cite{S-proto}, Proposition $6.4$). On the other hand, in~\cite{2Chs} we showed that a monoid is a group if and only if it is a protomodular object. Thus one could be led to think that the notions of $\s$-special object and protomodular object are always equivalent, like in the case of monoids. This is false: actually, neither condition is implied by the other.

\begin{theorem}\label{Independence}
In J\'onsson--Tarski varieties, the concepts of a protomodular object and of an $\s$-special object (for $\s$ the class of Schreier split epimorphisms) are independent.
\end{theorem}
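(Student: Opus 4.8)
The statement calls for two essentially independent examples: a J\'onsson--Tarski variety containing an object that is protomodular but not $\s$-special, and a J\'onsson--Tarski variety containing an object that is $\s$-special but not protomodular. The plan is to obtain the first by overloading a variety with a group structure \emph{unrelated} to its J\'onsson--Tarski operation, so that the category is protomodular for reasons that do nothing to make the special point $X\times X\rightrightarrows X$ a Schreier point; and to obtain the second by working in a variety so poor in operations that its J\'onsson--Tarski operation can be ``group-like'' on an object while there are not enough operations around to force all points over it to be strong.

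For the first direction, consider the variety $\mathsf{MonGp}$ whose objects are sets carrying a monoid structure $(+,0)$ together with a group structure $(\oplus,0,-)$ sharing the unit $0$, with no compatibility axioms, and take $+$ as the J\'onsson--Tarski operation. First I would check that $\mathsf{MonGp}$ is a protomodular variety by the syntactic characterisation of protomodular varieties (Bourn--Janelidze): the binary term $\alpha(x,y)=x\oplus(-y)$ satisfies $\alpha(x,x)=0$, and the binary term $\beta(w,y)=w\oplus y$ satisfies $\beta(\alpha(x,y),y)=x$. Hence $\mathsf{MonGp}$ is protomodular as a category, so \emph{every} object is a protomodular object, and in particular every point over it is stably strong, requiring no separate pullback computation. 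Next I would unfold the $\s$-special condition: for the point $(\pi_2,\langle 1_X,1_X\rangle)$ with kernel $\langle 1_X,0\rangle$, being a Schreier point means precisely that for all $a,b\in X$ there is a unique $c$ with $a=c+b$, i.e.\ that right $+$-translations are bijective, which for an associative unital operation is equivalent to $(X,+,0)$ being a group. It then suffices to take any object whose $+$-reduct is $(\N,+,0)$ and whose $\oplus$-reduct is any group structure on the countable set $\N$ fixing $0$ (for instance transported from $\mathbb{Z}$ along a bijection): this object is protomodular but not $\s$-special. This also realises the failure of the converse of Corollary~\ref{s-special => proto} announced above, since here $X$ is protomodular while not even the special point over $X$ is Schreier.

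For the second direction, work in the variety $\Mag$ of unitary magmas and take $X=\mathbb{Z}$, the integers under addition regarded as a unitary magma. Then $X$ is $\s$-special, because the right translations $c\mapsto c+b$ are bijections for every $b$. To see that $X$ is not a protomodular object I would exhibit a point over $X$ that fails to be a strong point, namely the codiagonal $\nabla\colon\mathbb{Z}\ast\mathbb{Z}\to\mathbb{Z}$ with the first coprojection $\iota_1$ as section, where $\ast$ denotes the coproduct in $\Mag$. The crucial claim is that $\ker\nabla$ and $\iota_1$ do not form a jointly (extremal-)epimorphic pair: the submagma of $\mathbb{Z}\ast\mathbb{Z}$ generated by $\ker\nabla\cup\iota_1(\mathbb{Z})$ does not contain $\iota_2(1)$. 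This is proved from the normal form for a free product of unitary magmas --- reduced non-associative words over the two copies of $\mathbb{Z}\setminus\{0\}$, subject to the unit laws and to contraction of two same-copy leaves sharing a node: the only ``pure second-copy'' element of $\ker\nabla$ is $0$, and, precisely because $\oplus$ is not associative, forming $\oplus$-combinations inside the generated submagma never produces a new pure second-copy leaf. Hence the generated submagma is proper, $(\nabla,\iota_1)$ is not strong, and $\mathbb{Z}$ is not protomodular in $\Mag$.

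The syntactic verification and the unfolding of the special point are routine; the main obstacle is the normal-form argument in the second example, where one must set up the description of the free product of unitary magmas carefully enough to run the induction on word complexity showing that $\iota_2(1)$ is never generated. It is worth stressing that non-associativity is doing the work here: in an associative variety one could cancel elements of the form $\iota_1(n)\oplus\iota_2(-n)$ against $\iota_1$-translates and thereby generate everything, so the analogous point over a group in $\Mon$ (or in any variety in which the J\'onsson--Tarski operation is associative) is strong. This is the structural reason why a non-associative variety is needed for this direction, and why varieties such as $\Mon$ and $\mathsf{CMon}$, where the protomodular objects are exactly the $\s$-special ones, cannot serve.
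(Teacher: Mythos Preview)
Your proposal is correct; both directions work. The routes differ from the paper's in interesting ways.

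For the direction ``protomodular but not $\s$-special'', the paper invokes the variety $\HSLat$ of Heyting semilattices: by Johnstone it is protomodular, hence every object is protomodular, yet by Proposition~\ref{right loop} an $\s$-special object must carry a right loop structure with respect to the J\'onsson--Tarski operation, and Johnstone also shows $\HSLat$ is not protomodular with $n=1$, so not every object can be a right loop. Your construction of $\mathsf{MonGp}$ achieves the same end by a direct and entirely self-contained device---grafting an unrelated group operation onto a monoid to force protomodularity without touching the J\'onsson--Tarski operation. This avoids appealing to external results, at the price of inventing an ad hoc variety; the paper's approach, by contrast, reveals the general principle that \emph{any} pointed protomodular variety which is not a variety of right $\Omega$-loops furnishes such examples.

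For the direction ``$\s$-special but not protomodular'', the paper stays in $\Mag$ as you do, but takes the much smaller object $C_2$ and exhibits an explicit three-element unitary magma over it whose section and kernel visibly fail to be jointly extremal-epimorphic (Example~\ref{Example Mag}). This is checkable by hand in a few lines. Your argument via the codiagonal $\nabla\colon \mathbb{Z}\ast\mathbb{Z}\to\mathbb{Z}$ is valid---the complexity invariant (number of leaves in the normal form) shows that the only single-leaf elements reachable from $\ker\nabla\cup\iota_1(\mathbb{Z})$ are first-copy leaves, so $\iota_2(1)$ is missed---but it requires establishing the normal form for coproducts of unitary magmas and is considerably heavier than the paper's finite counterexample. (A small slip: in your last paragraph you write ``$\oplus$ is not associative''; you mean the magma operation in $\Mag$, not the group operation from the first example.)
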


We start with a counterexample: a variety where not all $\s$-special objects are protomodular.

\begin{example}\label{Example Mag}
Let $\Mag$ be the J\'onsson--Tarski variety of unitary magmas (whose theory contains a unique constant $0$ and a binary operation $+$ such that $0+x=x=x+0$). We show that the unitary magma $Y=C_2=(\{0,1\},+)$, the cyclic group of order $2$, is an $\s$-special object which is not protomodular in $\Mag$.

It is easy to see that $Y$ is a special Schreier object: the map $q$ in
$$
\xymatrix@!0@=6em{Y \ar@<-.5ex>@{ |>->}[r]_-{\langle 1_Y,0 \rangle} & Y\times Y \ar@<-.5ex>@{>>}[r]_-{\pi_2} \ar@{-->}@<-.5ex>[l]_-{q} & Y \ar@<-.5ex>[l]_-{\langle 1_Y,1_Y \rangle}}
$$
which sends $(0,0)$ and $(1,1)$ to $0$, and $(1,0)$ and $(0,1)$ to $1$, is a Schreier retraction.

Next we show that $Y$ is not a protomodular object, by giving an example of a split epimorphism over $Y$ which is not a strong point. In $\Mag$ we consider the point $(f,s)$ and its kernel
\[
\xymatrix@!0@=4em{ \{0\} \ar@{^{(}->}[r]_-k & X \ar@<-.5ex>@{>>}[r]_-{f} & Y, \ar@<-.5ex>[l]_-{s}}
\]
where $X=(\{0,a,b\},+)$ is defined by
\[
\begin{array}{l|lll}
 + & 0 & a & b \\
 \hline
 0 & 0 & a & b \\
 a & a & 0 & 0 \\
 b & b & 0 & 0
\end{array}
\]
and
\[
\begin{cases}
f(0)=0\\
f(a)=1=f(b),	
\end{cases}
\qquad\qquad
\begin{cases}
s(0)=0\\
s(1)=a.	
\end{cases}
\]
Note that $X$ is not associative since, for instance, $(a+a)+b=0+b=b$ and $a+(a+b)=a+0=a$.

The point $(f,s)$ is not strong, because there exists a monomorphism $s$ which is not an isomorphism, yet makes the diagram
$$
\xymatrix{ & Y \ar@{ >->}[d]^-s \ar@{=}[dr] \\
 \{0\} \ar@{^{(}->}[ur] \ar@{^{(}->}[r]_-k & X & Y \ar[l]^-s}
$$
commute.
\end{example}

\begin{proposition} \label{right loop}
Let $\V$ be a J\'onsson--Tarski variety. An algebra in $\V$ is special with respect to the class of Schreier split epimorphisms if and only if it has a right loop structure.
\end{proposition}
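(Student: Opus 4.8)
The plan is to unpack what it means for the point~\eqref{point S-special object} to be a Schreier split epimorphism in a J\'onsson--Tarski variety~$\V$, and then to show that the associated Schreier retraction~$q$ provides exactly the data of a right loop operation on the underlying set of $X$. Recall that a \emph{right loop} (with unit) is a set equipped with a unital binary operation~$+$ such that, for all $y$ and $z$, the equation $x+y=z$ has a unique solution $x$; equivalently, there is a second binary operation~$\backslash$ satisfying $(z\backslash y)+y=z$ and $(x+y)\backslash y=x$ (right division). So I would aim to produce, from a Schreier structure on~\eqref{point S-special object}, precisely such a right division operation, and conversely.

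First I would apply the definition of a Schreier split epimorphism (properties \S1 and \S2) to the specific point $(\pi_2,\langle 1_X,1_X\rangle)$ with kernel $\langle 1_X,0\rangle\colon X\to X\times X$. For $x=(x_1,x_2)\in X\times X$, property \S1 gives a unique $a\in X$ with $(x_1,x_2)=\langle 1_X,0\rangle(a)+\langle 1_X,1_X\rangle(x_2)=(a,0)+(x_2,x_2)=(a+x_2,x_2)$. So this just says: for all $x_1,x_2$ there is a \emph{unique} $a$ with $a+x_2=x_1$. That uniqueness-of-solutions statement for the equation $a+x_2=x_1$ is exactly the right loop property. I would set $x_1\backslash x_2:=a=q(x_1,x_2)$; then \S1 reads $(x_1\backslash x_2)+x_2=x_1$ and \S2 (which here says $q((a,0)+(x_2,x_2))=q(a+x_2,x_2)=a$) reads $(a+x_2)\backslash x_2=a$. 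These are precisely the right division axioms, so $X$ acquires a right loop structure. Conversely, given a right loop structure with division $\backslash$ on $X$, defining $q(x_1,x_2):=x_1\backslash x_2$ makes \S1 and \S2 hold, so~\eqref{point S-special object} is a Schreier split epimorphism and $X$ is $\s$-special. I should also double-check compatibility of $q$ with the other operations of $\V$: by the remark following Theorem~\ref{Theorem Monoids}, if $\theta$ is $n$-ary then $q(\theta(\cdot),\theta(\cdot))=\theta(q,\dots,q)$ is forced, which is automatic here because $q$ is read off from \S1/\S2 componentwise, and a direct verification shows $\backslash$ is then well-defined as an operation of the enlarged theory; but one has to note that being a right loop only constrains the J\'onsson--Tarski operation $+$, the other operations being unrestricted, which is consistent with the counterexample in Example~\ref{Example Mag} (where $C_2$ is a group, in particular a right loop).

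The main subtlety — and the step I expect to require the most care — is bookkeeping the direction conventions: since we are using the \emph{direct} imaginary splitting / the original (right homogeneous) notion, I must be sure whether \S1 produces solutions to $a+x_2=x_1$ or to $x_2+a=x_1$, i.e.\ whether we land on right loops or left loops. With the convention $x=k(a)\cdot sf(x)$ from Section~\ref{Schreier split extensions of monoids}, $k(a)$ sits on the \emph{left}, so the equation is $a+x_2=x_1$ with $x_2=f(x)$ on the right; uniqueness of the left factor $a$ given the right factor is the \emph{right} loop condition (right division exists). I would state this explicitly to avoid the left/right ambiguity. The rest is a routine translation between the element-wise Schreier axioms \S1--\S2 and the loop axioms, with the only other point worth mentioning being that $q$ being merely a function (not a homomorphism) is exactly what is expected, since right division in a loop is generally not a homomorphism of loops.
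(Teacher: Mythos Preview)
Your proposal is correct and follows essentially the same approach as the paper: unpack \S1 and \S2 for the point $(\pi_2,\langle 1_X,1_X\rangle)$ to read off the right-division operation, and conversely use right division to define~$q$. The only difference is notational (you write $\backslash$ where the paper writes~$-$); your paragraph about ``compatibility of $q$ with the other operations of~$\V$'' is unnecessary, since the Schreier retraction is by definition only a set-theoretic map, and the proposition asserts nothing about how the right-loop structure interacts with the remaining operations.
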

\begin{proof}
We recall that a \defn{right loop} $(X, +, -, 0)$ is a set $X$ with two binary operations $+$ and $-$ and a unique constant $0$ such that the following axioms are satisfied:
\begin{equation*}\label{axioms right loop}
 x+0 = 0+x = x, \qquad (x-y)+y=x, \qquad (x + y) - y = x.
\end{equation*}
Now, given an object $X$ in $\V$, suppose that the split epimorphism~\eqref{point S-special object} is a Schreier split epimorphism. Then there exists a map $q \colon X \times X \dashrightarrow X$ such that, for all $x$, $y \in X$, we have
\[ (x, y) = (q(x,y), 0) + (y, y), \]
from which we deduce $x = q(x,y) +y$. Let us then define
\[ x - y = q(x, y). \]
Clearly we have $(x-y)+y = q(x,y) + y = x$. Moreover, \S2 tell us that
\[ q(x+y, y)= x, \]
or, in other words,
\[ (x+y)-y = x. \]
Conversely, given a right loop $(X, +, -, 0)$, we can define $q \colon X \times X \dashrightarrow X$ by putting $q(x,y) = x-y$. It is then immediate to check that \S1 and \S2 are satisfied.
\end{proof}

\begin{remark}\label{left loop}
In any J\'onsson--Tarski variety, a similar proof shows that an object~$X$ is special with respect to the class of left homogeneous split epimorphisms if and only if it has a \defn{left loop} structure:
\begin{equation*}\label{axioms left loop}
 x+0 = 0+x = x, \qquad x+(-x+y)=y, \qquad -x+(x+y) = y.
\end{equation*}
\end{remark}

As a consequence of these observations, we are able to complete the proof of Theorem~\ref{Independence} with Example~\ref{Heyting}.

We recall from~\cite{Bourn-Janelidze} that a pointed variety is protomodular if and only if its theory contains only one constant 0, and there exists a positive integer $n$, as well as $n$ binary operations $\alpha_1$, \dots, $\alpha_n$ and an $(n+1)$-ary operation $\beta$ such that
\[
\begin{cases}
\alpha_i(x,x)=0 &\forall i\in \{1,\dots, n\}, \\
 \beta(\alpha_1(x,y), \dots, \alpha_n(x,y), y)=x.
\end{cases}
\]
In many of the known examples of pointed protomodular varieties, such as $\Gp$ (or any variety which contains the right loop operations), the characterisation above works for $n=1$.

\begin{example}\label{Heyting}
Let $\HSLat$ be the variety of \defn{Heyting semilattices}, namely the variety defined by a unique constant $\top$ and two binary operations $\wedge$, $\rightarrow$ such that $(\wedge, \top)$ gives the structure of a semilattice with a top element, and the following axioms are satisfied:
\begin{align*}
(x\rightarrow x) &= \top \\
x\wedge (x\rightarrow y)&=x\wedge y \\
y\wedge (x\rightarrow y)&=y \\
x\rightarrow (y\wedge z)&=(x\rightarrow y)\wedge (x\rightarrow z).	
\end{align*}
In~\cite{Johnstone:Heyting} it was shown that $\HSLat$ is a protomodular variety. Accordingly, all objects in $\HSLat$ are protomodular. It was also shown that $\HSLat$ is protomodular for $n=2$, but not for $n=1$. This allows us to conclude that not all objects are $\s$-special. Indeed, $\HSLat$ is a J\'{o}nsson--Tarski variety, so by Proposition~\ref{right loop} all $\s$-special objects are right loops. If all objects were $\s$-special, then $\HSLat$ would be a protomodular variety with $n=1$, which is false.	
\end{example}

This argument can be used on any pointed protomodular variety for which the smallest number $n$ that makes the characterisation recalled above work is strictly larger than $1$. Hence our notion of intrinsic Schreier split epimorphism gives a categorical method for distinguishing general pointed protomodular varieties from varieties of right $\Omega$-loops. Indeed, in every protomodular variety $\V$ all objects are protomodular, while every object in $\V$ is $\s$-special with respect to the class $\s$ of (intrinsic) Schreier split epimorphisms if and only if $\V$ is a variety of right $\Omega$-loops.

\begin{proof}[Proof of Theorem~\ref{Independence}]
Combine Example~\ref{Example Mag} with Example~\ref{Heyting}.
\end{proof}


\providecommand{\noopsort}[1]{}
\providecommand{\bysame}{\leavevmode\hbox to3em{\hrulefill}\thinspace}
\providecommand{\MR}{\relax\ifhmode\unskip\space\fi MR }
\providecommand{\MRhref}[2]{%
  \href{http://www.ams.org/mathscinet-getitem?mr=#1}{#2}
}
\providecommand{\href}[2]{#2}

\end{document}